\newtheorem{theorem}{Theorem}[section]
\newtheorem{proposition}[theorem]{Proposition}
\newtheorem{lemma}[theorem]{Lemma}
\newtheorem{corollary}[theorem]{Corollary}
\newtheorem{remark}[theorem]{Remark}
\theoremstyle{definition}
\newtheorem{definition}[theorem]{Definition}
\newcommand{\nat}{\mathbb{N}}
\newcommand{\colim}{\operatorname{colim}}
\newcommand{\sub}{\mathsf{01Sub}}
\newcommand{\czf}{\mathbf{CZF}}
\newcommand{\monadf}[1]{\mathsf{#1}}
\newcommand{\btwo}{\mathbf{2}}
\newcommand{\bthree}{\mathbf{3}}
\newcommand{\names}{\mathbb{A}}
\newcommand{\rint}{[0,1]}
\newcommand{\rmap}{\monadf{R}\text{-}\mathbf{Map}}
\newcommand{\lmap}{\monadf{L}\text{-}\mathbf{Map}}
\newcommand{\supp}{\operatorname{Supp}}
\newcommand{\perma}{\operatorname{Perm}(\names)}
\newcommand{\catc}{\mathbb{C}}
\title{An Algebraic Weak Factorisation System on 01-Substitution Sets:
  A Constructive Proof} \author{Andrew Swan\thanks{This work was
    supported by EPRSC grant EP/K023128/1}}
\begin{document}
\maketitle
\begin{abstract}
  We will construct an algebraic weak factorisation system on the
  category of 01-substitution sets such that the $\monadf{R}$-algebras
  are precisely the Kan fibrations together with a choice of Kan
  filling operation. The proof is based on Garner's small object
  argument for algebraic weak factorization systems. In order to
  ensure the proof is valid constructively, rather than applying the
  general small object argument, we give a direct proof based on the
  same ideas. We use this us to give an explanation why the
  $J$-computation rule is absent from the original cubical set model
  and suggest a way to fix this.
\end{abstract}

\section{Introduction}

\subsection{Aims}

We will construct an algebraic weak factorisation system on $\sub$
such that the $\monadf{R}$-algebras are precisely the Kan fibrations
together with a choice of Kan filling operation. It will be
algebraically free in the sense of \cite{garnersmallobject}. However
rather than applying the result in \cite{garnersmallobject}, we will
give a direct construction based on the same ideas. The construction
is also similar to Kan completion, as referred to in
\cite{bchcubicalsets}.  This approach has three main advantages.

Firstly, this allows us to ensure that the proof holds in a
constructive setting (such as the set theory $\czf$), whereas it is
not clear whether the proof in \cite{garnersmallobject} is valid
constructively.

Secondly, it allows us to see explicitly the objects involved in the
construction, which may be useful if one wanted to use the ideas here
in computer implementations.

Thirdly, we will see that the proof uses only countable colimits, not
requiring the full cocompleteness of $\sub$. Although $\sub$ is
cocomplete, this property should make it easier to apply the proof in
other contexts. For example, if one were to define ``01-substitution
assemblies'' by analogy with Stekelenburg's recent work on simplicial
assemblies in \cite{stekelenburgsimpass} this may be useful.

We will also suggest how to construct stable path objects. In a future
paper, the author will use this to give a constructive model of
homotopy type theory including the computational rule for identity,
which is absent in \cite{bchcubicalsets}.

\subsection{Cubical Sets and 01-Substitution Sets}
\nocite{pittsnomsets}
\nocite{pittssub}

Cubical sets were developed by Bezem, Coquand and Huber in
\cite{bchcubicalsets} (and described in more detail in
\cite{huberthesis}) as a constructive model of homotopy type theory,
based on the simplicial set model due to Voevodsky.

Pitts showed in \cite{pittssub} and \cite{pittsnompcs}, following
earlier work by Staton, that the category of cubical sets is
equivalent to a category based on nominal sets, called
$01$-substitution sets. In this paper we will work over this category,
$\sub$ of $01$-substitution sets, using the notation and definitions
of open box and Kan fibration that appear in \cite{pittssub}. For a
good introduction to nominal sets, on which 01-substitution sets are
based, see \cite{pittsnomsets}.

We recall the following definitions from \cite{pittsnomsets}. Let
$\names$ be a set (which we will refer to as the set of
\emph{names}). Write $\perma$ for the group of finite permutations
(that is, permutations $\pi$ such that $\pi(a) = a$ for all but
finitely many $a \in \names$). Recall that a $\perma$-set is a set
$X$, together with an action of $\perma$ on $X$ (or equivalently a
presheaf over $\perma$ when viewed as a one object category in the
usual way).

\begin{definition}[Pitts, Gabbay]
  \begin{enumerate}
  \item Let $X$ be a $\perma$-set (writing $\cdot$ for
    the action) and let $x \in X$. We say $A \subseteq \names$ is a
    \emph{support} for $x$ if whenever $\pi(a) = a$ for all $a \in A$,
    also $\pi \cdot x = x$.
  \item Let $X$ be a $\perma$-set and $x \in X$. We say $x$ is
    \emph{equivariant} if $\pi \cdot x = x$ for all $\pi \in \perma$,
    (or equivalently if $\emptyset$ is a support for $x$).
  \item Let $X$ and $Y$ be $\perma$-sets. A function $f: X \rightarrow
    Y$ is \emph{equivariant} if it is a morphism in the category of
    $\perma$ sets, or equivalently if it is equivariant as an element
    of the exponential $Y^X$ in the category of $\perma$ sets, which
    is described explicitly as the set of functions $X$ to $Y$ with
    action given by conjugation.
  \item A \emph{nominal set} is a $\perma$-set $X$,
    such that for every $x \in X$, there exists a finite set $A
    \subseteq \names$ such that $A$ is a support for $x$.
  \item Let $X$ and $Y$ be nominal sets and let $x \in X$ and $y \in
    Y$. We say $x$ is \emph{fresh} for $y$ and write $x \# y$ if there
    exist finite sets $A, B \subseteq \names$ such that $A$ is a
    support for $x$, $B$ is a support for $y$ and $A \cap B =
    \emptyset$.
  \item Let $X$ and $Y$ be nominal sets. The \emph{separated product},
    $X \ast Y$, of $X$ and $Y$ is the nominal set with elements
    $(x, y)$ where $x \in X$, $y \in Y$ and $x \# y$ (with the action
    defined componentwise).
  \item Let $X$ be a nominal set. We define an equivalence relation
    $\sim$ on $\names \times X$, referred to as
    \emph{$\alpha$-equivalence} as follows. $(a, x) \sim (a', x')$ if
    and only there exists $a''$ fresh for $a, x, a', x'$ such that
    $(a'' \; a) x = (a'' \; a') x'$. The quotient
    $\names \times X / \sim$ can be viewed in a natural way as a
    nominal set $[\names]X$, referred to as the \emph{name
      abstraction} on $X$. We write $\langle a \rangle x$ for the
    equivalence class of $(a, x)$.
  \end{enumerate}
\end{definition}

The lemma below is a variant of standard results in nominal sets as in
\cite[Chapter 4]{pittsnomsets}. It allows us to easily construct
morphisms $F: W \rightarrow Y$ when $W \subseteq [\names]X$ for some
nominal sets $X$ and $Y$ by the following heuristic. We assume we have
been given some finite list of parameters. We then define $F(\langle a
\rangle x)$ when $a$ is fresh for the parameters, to get a partial
function on the preimage of $W$ in $\names \times X$. This can be done
without worrying whether or not $F$ respects $\alpha$-equivalence (a
priori). We then check that $a$ is fresh for $F(\langle a \rangle
x)$. We then check or note by inspection that $F$ is equivariant
modulo the parameters (ie the list of parameters is a support for
$F$).  We then apply the lemma to get a well defined morphism, that in
particular necessarily respects $\alpha$-equivalence by well
definedness.

\begin{lemma}
  \label{lem:nameabsext}
  Let $Z$ be a subobject of $\names \times X$ (ie an equivariant
  subset) and write $Z'$ for the image of $Z$ under the projection
  $\names \times X \rightarrow [\names]X$. Let $Y$ be another nominal
  set and $F : Z \rightharpoondown Y$ a partial function with finite
  support (under the action given by conjugation)
  such that for any $a \# F$, if $(a, x) \in Z$, then $F(a, x)
  \downarrow$ and $a \# F(a,x)$. Then $F$ extends uniquely to a function
  $\bar{F}: Z' \rightarrow Y$. Furthermore, $\bar{F}$ is constructed
  equivariantly, in the following sense: for any $\pi \in \perma$,
  $\overline{\pi \cdot F} = \pi \cdot \overline{F}$.
\end{lemma}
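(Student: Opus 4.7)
The plan is to define $\bar{F}(\langle a\rangle x)$ by selecting a representative $(a', x')$ in which the bound name $a'$ is fresh for $F$. The hypothesis that $a \# F$ implies $a \# F(a,x)$ is a ``binder-like'' condition: it says the output $F(a,x)$ does not genuinely depend on the particular fresh name $a$ used, and therefore the assignment descends through the quotient by $\alpha$-equivalence.

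Concretely, let $A$ be a finite support for $F$. Given $\langle a \rangle x \in Z'$ and a representative $(a,x) \in \names \times X$, choose some $a' \in \names$ fresh for $A$, $a$, and $x$; such an $a'$ exists since $X$ is nominal and $A$ is finite. Then $\langle a \rangle x = \langle a' \rangle (a'\;a)\cdot x$ by the definition of $\alpha$-equivalence, and $(a', (a'\;a)\cdot x) \in Z$ by equivariance of $Z$. Since $a' \# F$, the hypothesis guarantees $F(a', (a'\;a)\cdot x) \downarrow$ and $a' \# F(a', (a'\;a)\cdot x)$. Set
\[
  \bar{F}(\langle a\rangle x) \;:=\; F(a', (a'\;a)\cdot x).
\]
To see this is well defined, given two choices $a'_1, a'_2$ both fresh for $A, a, x$, apply the transposition $(a'_1\;a'_2)$ to the equation. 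Conjugation by $(a'_1\;a'_2)$ fixes $F$ (both names are fresh for $A$), fixes $x$, and sends $a'_1$ to $a'_2$, hence sends $F(a'_1, (a'_1\;a)\cdot x)$ to $F(a'_2, (a'_2\;a)\cdot x)$; but it also fixes $F(a'_1, (a'_1\;a)\cdot x)$ since $a'_1$ is fresh for it by the hypothesis and $a'_2$ does not lie in its support (having been chosen fresh). Hence the two values agree. Independence of the representative $(a, x)$ is then immediate: any two representatives of the same class admit a common name $a'$ fresh for both, and the definition uses only the class-level data $(a'\;a)\cdot x$.

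For uniqueness, any extension $G$ of $F$ must satisfy $G(\langle a'\rangle x') = F(a', x')$ whenever $a' \# F$ and $(a', x') \in Z$, and we have just shown every element of $Z'$ admits such a representative, so $G$ is forced to coincide with $\bar{F}$. Equivariance of the construction is then a matter of inspection: for any $\pi \in \perma$, the finite set $\pi \cdot A$ supports $\pi \cdot F$, and choosing $\pi(a')$ as the fresh name in the construction for $\pi \cdot F$ at $\pi\cdot\langle a\rangle x$ reproduces $\pi \cdot \bar{F}(\langle a\rangle x)$. I expect the main obstacle to be the well-definedness step, which requires careful bookkeeping of which names are fresh for what; the hypothesis $a \# F \Rightarrow a \# F(a,x)$ must be used in precisely the right place (to show the transposition fixes the output), and the argument will not go through if one weakens it.
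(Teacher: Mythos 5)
Your construction is correct and is essentially the standard proof of the result the paper relies on (Theorem 4.15 of Pitts' book on nominal sets), which the paper itself does not spell out but simply cites as "the same proof works here": define the value via a representative whose bound name is fresh for the support of $F$, and use a transposition of two fresh names, fixed by the hypothesis $a \# F(a,x)$ together with standard freshness facts, to show independence of all choices. The only small omission is that you never explicitly verify that $\bar{F}$ \emph{extends} $F$, i.e.\ that $\bar{F}(\langle a \rangle x) = F(a,x)$ whenever $(a,x) \in Z$ and $a \# F$ (the fact your uniqueness paragraph quietly appeals to); this follows by exactly the same transposition argument, applying $(a\;a')$ and using $a \# F(a,x)$ and $a' \# F(a,x)$, so it is a routine addition rather than a genuine gap.
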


\begin{proof}
  This is a slight generalisation of \cite[Theorem 4.15]{pittsnomsets}
  and the same proof works here.
\end{proof}

We will also use the following corollary.

\begin{corollary}
  \label{cor:nameabsextexp}
  Let $Z$ and $Z'$ be the same as in lemma \ref{lem:nameabsext}. Let
  $W$ be another nominal set, and let $F: W \times Z \rightharpoondown
  Y$ be a partial function with finite support $A \subseteq \names$
  such that for any $(a, x) \in Z$ and $w \in W$, if $a \# F$ and $a
  \# w$ then $F(a, x) \downarrow$ and $a \# F(w, (a, x))$. Then $F$
  extends uniquely to a map $\bar{F}: W \times Z' \rightarrow Y$ and
  $A$ is a support for $\bar{F}$.
\end{corollary}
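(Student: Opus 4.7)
The plan is to reduce to Lemma \ref{lem:nameabsext} by currying in the $W$ argument. For each $w \in W$, define a partial function $F_w : Z \rightharpoondown Y$ by $F_w(a,x) = F(w,(a,x))$. Since $A$ supports $F$ and $\supp(w)$ supports $w$, the set $A \cup \supp(w)$ is a finite support for $F_w$ (under conjugation). Consequently, if $a \# F$ and $a \# w$, then $a$ is not in any finite support of $F_w$, so $a \# F_w$. The hypothesis on $F$ therefore yields exactly the hypothesis of Lemma \ref{lem:nameabsext} for $F_w$: whenever $(a,x) \in Z$ and $a \# F_w$, then $F_w(a,x) \downarrow$ and $a \# F_w(a,x)$. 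Applying the lemma gives a unique extension $\overline{F_w} : Z' \to Y$, and we define $\bar{F}(w, z) := \overline{F_w}(z)$.

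Next I would verify that $\bar F$ is a morphism of nominal sets with $A$ as a support. The key is the equivariance clause of Lemma \ref{lem:nameabsext}. For $\pi \in \perma$ fixing $A$ pointwise, a direct calculation using that $A$ supports $F$ shows $\pi \cdot F_w = F_{\pi \cdot w}$ as partial functions $Z \rightharpoondown Y$. Hence, by the equivariance clause,
\[
\overline{F_{\pi \cdot w}} \;=\; \overline{\pi \cdot F_w} \;=\; \pi \cdot \overline{F_w}.
\]
Evaluating both sides at $\pi \cdot z$ and rearranging shows $\bar F(\pi \cdot w, \pi \cdot z) = \pi \cdot \bar F(w, z)$, which is precisely the statement that $A$ is a support for $\bar F$. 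In particular (taking arbitrary $\pi$ once we enlarge $A$ by the supports of any additional parameters) $\bar F$ is an equivariant map when $A$ itself is equivariant data, so it is a bona fide morphism of nominal sets.

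Finally, uniqueness of $\bar F$ follows immediately from the uniqueness clause of Lemma \ref{lem:nameabsext} applied fiberwise in $w$: any extension $G$ of $F$ satisfies $G(w, -) = \overline{F_w}$ by that uniqueness, so $G = \bar F$. The main obstacle is purely bookkeeping: one must carefully track that $A$ (and not the larger $A \cup \supp(w)$, which depends on $w$) supports the whole construction, and this is handled exactly by the equivariance clause of the lemma applied uniformly in $w$. No new freshness arguments beyond those already used for Lemma \ref{lem:nameabsext} itself are required.
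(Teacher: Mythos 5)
Your proof is correct and takes essentially the same route as the paper: curry in $W$, apply Lemma \ref{lem:nameabsext} to each $F_w = F(w,-)$ with finite support $A \cup B$ for $B$ a finite support of $w$, and set $\bar{F}(w,z) := \overline{F_w}(z)$; the paper's own proof is just a terser version of this, and your extra verification of the support of $\bar F$ via the equivariance clause is a welcome addition. One small caveat: phrase the support bookkeeping in terms of \emph{some} finite support $B$ of $w$ rather than a least support $\supp(w)$, since the paper works constructively where least supports need not exist (and likewise ``$a$ is not in \emph{any} finite support of $F_w$'' should be ``$a$ lies outside \emph{some} finite support of $F_w$'').
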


\begin{proof}
  For each $w \in W$, we have a partial function $F(w, -): Z
  \rightharpoondown Y$. Furthermore, if $B$ is a finite support for
  $w$, then $A \cup B$ is a finite support for $F(w, -)$. Applying
  lemma \ref{lem:nameabsext} gives us a function $\bar{F}_w : Z'
  \rightarrow Y$. We then define $F : W \times Z' \rightarrow Y$ by
  $\bar{F}(w, z) := \bar{F}_w(z)$.
\end{proof}

We now recall from \cite{pittssub} and \cite{pittsnompcs} Pitts'
definition of $01$-substitution sets and his translation of the
Bezem-Coquand-Huber definitions of open box and Kan fibration.

\begin{definition}[Pitts]
  Let $X$ be a nominal set. A $01$-substitution operation on $X$ is a
  morphism $s:X \times \names \times 2 \rightarrow X$ in nominal sets,
  satisfying the following axioms. We write $x(a := i)$ for $s(x, a,
  i)$. For any $x \in X$, $a, a' \in \names$ and $i, i' \in 2$,
  \begin{enumerate}
  \item $a \,\#\, x(a := i)$
  \item if $a \,\#\, x$, then $x(a := i) \,=\, x$
  \item if $a \neq a'$, then $x(a := i)(a' := i') \,=\, x(a' := i')(a :=
    i)$ 
  \end{enumerate}

  We say that a nominal set equipped with a $01$-substitution
  operation is a \emph{$01$-substitution set}. $01$-substitution sets
  form a category $\sub$, where morphisms are morphisms in nominal
  sets that also preserve the $01$-substitution operation.
\end{definition}

Note that the name abstraction $[\names]X$ of a $01$-substitution set
$X$ can be viewed itself as a $01$-substitution set in a canonical
way.

We now give the definitions of open box and Kan fibration.

\begin{definition}[Pitts]
  \begin{enumerate}
  \item Let $A$ be a finite subset of $\names$ with $a \in A$ and let
    $f : X \rightarrow Y$ be a morphism in $\sub$. A \emph{$1$-open
      $(A, a)$-box over $f$} is a pair $(u, y)$ with $y \in Y$ and
    $u : (A \times 2) \setminus \{(a, 1)\} \rightarrow X$ satisfying
    the following. For all $(b, i), (b', i') \in (A \times 2)
    \setminus \{(a, 1)\}$,
    \begin{enumerate}
    \item $b \,\#\, u(b, i)$
    \item $u(b, i)(b' := i') \,=\, u(b', i')(b := i)$
    \item $f(u(b, i)) \,=\, y(b := i)$
    \end{enumerate}
  \item We also define \emph{$0$-open (A, a)-box over $f$} by simply
    replacing $1$ with $0$ in the above definition.
  \item Let $(u, y)$ be a $1$-open $(A, a)$-box over $f$. A
    \emph{filler} for $(u, y)$ is $x \in X$ such that 
    \begin{enumerate}
    \item for all $(b, i)
      \in (A \times 2) \setminus \{(a, 1)\}$, $x(b := i) = u(b, i)$
    \item $f(x) = y$
    \end{enumerate}
  \item We similarly define fillers for $0$-open boxes.
  \item Let $f : X \rightarrow Y$. A \emph{Kan filling operator for
      $1$-open boxes} is for each $1$-open box $(u, y)$, a choice of
    filler, $\uparrow (u, y)$ satisfying the following conditions
    (which we refer to as \emph{uniformity conditions}).
    \begin{enumerate}
    \item For each finite permutation $\pi$, $\uparrow (\pi (u, y)) =
      \pi (\uparrow (u, y))$.
    \item Whenever $(u, y)$ is a $1$-open $(A, a)$-box with $c \# A$
      and $i \in 2$, we have $\uparrow ((u, y)(c := i)) = (\uparrow
      (u, y))(c := i)$.
    \end{enumerate}
  \item We similarly define the notion of Kan filling operator
    $\downarrow (u, y)$ for $0$-open boxes $(u, y)$.
  \item We say $f$ is a \emph{Kan fibration} (or simply
    \emph{fibration}) if it admits both Kan filling operators.
  \end{enumerate}
\end{definition}

\begin{remark}
  Note that the definition of open box includes the case where $A =
  \{a\}$, and so the ``box'' only consists of one point $x \in X$ and
  an element of $Y$. This is an important special case, and
  essentially says that any path in $Y$ with endpoint $f(x)$ can be
  lifted to a path in $X$ with endpoint $x$.
\end{remark}

\begin{remark}
  Note that as a special case of the uniformity conditions we have
  that whenever $\pi$ is a permutation of a finite set $A \subseteq
  \names$ that fixes $a \in A$ and $u$ is a $1$-open $(A, a)$-box we
  have that if $\pi \cdot u = u$ then $\pi (\uparrow u) = \uparrow
  u$. Hence the uniformity conditions provide conditions on each
  filler individually, not just conditions on how the fillers of
  different open boxes relate to each other. In classical logic with
  the axiom of choice, the existence of uniform Kan filling operators
  is equivalent to the existence of fillers satisfying this
  ``symmetry preserving'' condition, which is stronger than just requiring
  fillers without the condition.
\end{remark}

\subsubsection{A Note on Nominal Sets in a Constructive Setting}

As in \cite{bchcubicalsets} we require the assumption that the set
$\names$ of names has decidable equality. Note in particular that for
sets with decidable equality, finite and finitely enumerable subsets
coincide (ie if a subset of $\names$ is the image of a function from a
natural number to $\names$, then it is the image of an injection from
a natural number to $\names$) and these are decidable subsets (ie if
$A \subseteq \names$ is finite then every element of $\names$ belongs
to $A$ or does not). We also assume that $\names$ is infinite in the
strong sense that for every finite $A \subseteq \names$ there exists
$a \in \names$ such that $a \notin A$.

In \cite{pittsnomsets} heavy use is made of the existence of least
finite support. Constructively this can't be guaranteed to exist; the
proof of theorem \ref{thm:nonarlm} provides an example where the
freshness relation is not decidable and hence there cannot be a least
support that is also a finite set. In fact one can show that in
general one cannot even prove constructively that any least support
exists, finite or otherwise.  In practice, however, most of the uses
of least finite support in \cite{pittsnomsets} can be viewed as a
``notational device'' to make definitions and statements of theorems
more concise, and these can be rephrased to work in a constructive
setting (see the work by Choudhury in \cite{choudhuryconnom}).

In some places we will for convenience assume the axiom of dependent
choice, but this will not be required for the main results.

\subsubsection{The Nerve of a Complete Metric Space}

To help us give some non trivial examples of objects of $\sub$ later,
we will use the notion from homotopical algebra of \emph{nerve}. This
gives a way to construct $01$-substitution sets from topological
spaces.

For this subsection suppose that $\names = \nat$. Also, for
convenience we will assume the axiom of dependent choice, since it is
often implicitly assumed in Bishop style analysis that we use here.

Note that $\rint^\nat$ has a canonical metric given by
the product metric (see \cite[Chapter 4, definition
1.7]{bishopbridges}), defined as follows for $r, r' \in \rint^\nat$.
\begin{equation}
  \label{eq:5}
  d(r, r') := \sum_{n \in \nat} \, 2^{-n} |r(n) - r'(n)|
\end{equation}

Let $X$ be a metric space. Then we define an action on uniformly
continuous functions $F : \rint^\nat \rightarrow X$ as follows. For
$\pi \in \perma$ and $r \in \rint^\nat$,
\begin{equation}
  \label{eq:7}
  (\pi \cdot F)(r) := F(r \circ \pi)
\end{equation}

We then define the \emph{nerve} of $X$, $N(X)$ to be the subobject of
the $\perma$ set defined above consisting of elements that have finite
support. Note that this is a nominal set by definition.

\begin{lemma}
  \label{lem:nervsupp}
  Let $A$ be a finite subset of $\nat$. Then for $F \in N(X)$, $A$ is
  a support for $F$ if and only if for every $r, r' \in \rint^\nat$ if
  $r|_A = r'|_A$ then $F(r) = F(r')$.
\end{lemma}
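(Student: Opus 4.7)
The plan is to prove the two implications separately, with the ``if'' direction being a direct unwinding of the action and the ``only if'' direction requiring a combination of the support property with the uniform continuity of $F$.

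For the easy direction, suppose that $F(r) = F(r')$ whenever $r|_A = r'|_A$. Given any $\pi \in \perma$ that fixes every element of $A$, I would observe that for any $r \in \rint^\nat$ one has $(r \circ \pi)|_A = r|_A$, so by hypothesis $(\pi \cdot F)(r) = F(r \circ \pi) = F(r)$. Since $r$ was arbitrary, $\pi \cdot F = F$, i.e.\ $A$ is a support for $F$.

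For the harder direction, suppose $A$ is a support for $F$ and fix $r, r' \in \rint^\nat$ with $r|_A = r'|_A$. I would show $d_X(F(r), F(r')) < \epsilon$ for every $\epsilon > 0$. Using uniform continuity, pick $\delta > 0$ such that $d(s, s') < \delta$ implies $d_X(F(s), F(s')) < \epsilon/2$, and then pick $N$ large enough that $\sum_{n \geq N} 2^{-n} < \delta$. Let $B = \{0, 1, \dots, N-1\}$, and using the fact that $\names = \nat$ is infinite, pick a finite set $C \subseteq \nat$ disjoint from $A \cup B$ with $|C| = |B \setminus A|$. Let $\pi \in \perma$ be the involution that swaps $B \setminus A$ with $C$ via some chosen bijection and fixes everything else; in particular $\pi$ fixes $A$. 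Define an auxiliary $\tilde{r} \in \rint^\nat$ by $\tilde{r}(n) = r'(n)$ for $n \in B$, $\tilde{r}(n) = r(\pi(n))$ for $n \in C$, and $\tilde{r}(n) = r(n)$ for $n$ outside $B \cup C$. A direct check shows that $\tilde{r}$ agrees with $r'$ everywhere on $B$ and that $\tilde{r} \circ \pi$ agrees with $r$ everywhere on $\nat \setminus C$. Since $C \subseteq \{n : n \geq N\}$, both discrepancies contribute at most $\sum_{n \geq N} 2^{-n} < \delta$ to the product metric, so $d(\tilde{r}, r') < \delta$ and $d(\tilde{r} \circ \pi, r) < \delta$. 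Because $\pi$ fixes $A$ and $A$ supports $F$, we have $F(\tilde{r} \circ \pi) = F(\tilde{r})$, and the triangle inequality in $X$ gives $d_X(F(r), F(r')) < \epsilon$.

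The main obstacle is exactly the harder direction: a priori the support condition only tells us that $F$ is invariant under \emph{reorderings} of the coordinates outside $A$, whereas the conclusion demands insensitivity to the actual \emph{values} on those coordinates. The trick is that uniform continuity lets us ignore all but finitely many coordinates up to $\epsilon$, and permuting those finitely many coordinates out to fresh names gives us the freedom to ``rewrite'' their values through the auxiliary point $\tilde{r}$ while still invoking the support invariance. The remaining work is the bookkeeping for $\tilde{r}$ and $\pi$ sketched above, which is routine once $N$ and $C$ have been chosen.
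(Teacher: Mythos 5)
Your proposal is correct and follows essentially the same route as the paper: use uniform continuity to reduce the comparison to the coordinates below a cutoff $N$, then move the finitely many relevant coordinates outside $A$ to fresh names by a permutation fixing $A$, build an auxiliary point ($\tilde r$, the paper's $r''$) close to one of $r, r'$ and whose $\pi$-translate is close to the other, and finish with the support invariance plus the triangle inequality. The only differences are cosmetic (your involution swapping $B\setminus A$ with a fresh set $C$ versus the paper's transpositions onto $N+1,\dots,N+k$, and the symmetric placement of the $r$- and $r'$-values), and the easy direction is identical.
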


\begin{proof}
  Assume first that $A$ is a support for $F$. We need to show that for
  $r, r' \in \rint^\nat$ with $r|_A = r'|_A$, we have $F(r) =
  F(r')$. Note that it suffices to show that for all $\epsilon > 0$,
  $d(F(r), F(r')) < \epsilon$ (for instance by
  \cite[Chapter 2, Lemma 2.18]{bishopbridges}).

  For any $\epsilon > 0$, there exists by uniform continuity of $F$,
  $\delta > 0$ such that for all $x, y \in \rint^\nat$ if $d(x, y) <
  \delta$ then $d(F(x), F(y)) < \frac{\epsilon}{2}$. Let $N$ be such
  that $2^{-N} < \delta$ and such that $n < N$ for all $n \in
  A$. Define $B$ to be $\{0, \ldots, N \} \setminus A$ and let
  $\{b_1,\ldots,b_k\}$ be an enumeration of $B$, with $b_i \neq b_j$
  for $i \neq j$.

  Then define $\pi$ to be the composition of transpositions
  \begin{equation}
    \pi := (b_1 \quad N + 1)(b_2 \quad N + 2) \ldots (b_k \quad N + k)
  \end{equation}
  and define $r'' \in \rint^\nat$ by
  \begin{equation}
    \label{eq:9}
    r''(n) :=
    \begin{cases}
      r(n) & n \leq N \text{ or } n > N + k \\
      r'(b_i) & n = N + i \text{ for some } 1 \leq i \leq k
    \end{cases}
  \end{equation}
  Then note that $r$ and $r''$ agree on the set $\{0,\ldots, N\}$, so
  by the definition of the metric on $\rint^\nat$ we have that $d(r,
  r'') \leq 2^{-N} < \delta$. Similarly, we have $d(r', r'' \circ \pi)
  \leq 2^{-N} < \delta$. Further, since $\pi$ fixes $A$, which is a
  support for $F$, we have
  \begin{align}
    F(r'' \circ \pi) &= (\pi \cdot F)(r'') \\
    &= F(r'')
  \end{align}
  Therefore we have
  \begin{align}
    d(F(r), F(r')) &\leq d(F(r), F(r'')) + d(F(r'' \circ \pi), F(r'))
    \\
    &< \frac{\epsilon}{2} + \frac{\epsilon}{2} \\
    &= \epsilon
  \end{align}
  as required.

  Finally, the converse is easy to show by noting that if
  $\pi$ fixes $A$ then $r|_A = (r \circ \pi)|_A$ for all $r \in
  \rint^\nat$.
\end{proof}

We now define a substitution operation on $N(X)$ as follows. For $F
\in N(X)$, $r \in \rint^\nat$, $a \in \nat$, $i \in \{0, 1\}$ and $b
\in \nat$, define
\begin{equation}
  \label{eq:15}
  F(a := i)(r)(b) :=
  \begin{cases}
    r(b) & b \neq a \\
    i & b = a
  \end{cases}
\end{equation}
One can use lemma \ref{lem:nervsupp} to easily show that this
satisfies the axioms for $01$-substitution set.

Also note that we can use lemma \ref{lem:nervsupp} to prove the
following lemma, that we will use later.

\begin{lemma}
  \label{lem:ctsfunctionsnerve}
  \begin{enumerate}
  \item   For any $a \in \nat$ there is a correspondence between $F
    \in N(X)$ such that $\{a\}$ is a support for $F$ and uniformly
    continuous functions $\rint \rightarrow X$.
  \item Such an $F$ corresponds to a constant function $\rint
    \rightarrow X$ if and only if $\emptyset$ is a support for $F$.
  \end{enumerate}
\end{lemma}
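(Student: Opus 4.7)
The plan is to use lemma \ref{lem:nervsupp} applied with $A = \{a\}$ to reformulate the condition ``$\{a\}$ supports $F$'' as ``$F(r)$ depends only on $r(a)$'', and then exhibit explicit inverse assignments between such $F$ and uniformly continuous functions $\rint \to X$.

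First, given $F \in N(X)$ with $\{a\}$ as a support, lemma \ref{lem:nervsupp} tells us that $F(r) = F(r')$ whenever $r(a) = r'(a)$, so the assignment $f(t) := F(r)$ for any $r \in \rint^\nat$ with $r(a) = t$ is well-defined. Conversely, given a uniformly continuous $f : \rint \rightarrow X$, define $F(r) := f(r(a))$. That $\{a\}$ is a support for this $F$ is immediate from lemma \ref{lem:nervsupp}, since $r|_{\{a\}} = r'|_{\{a\}}$ means exactly $r(a) = r'(a)$. I would then check directly that the two constructions are mutually inverse: in one direction, $f(t) = F(r)$ with $r(a) = t$ recovers $F$ via $F(r) = f(r(a))$ using the support property; in the other, it is definitional.

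The only real computation is to transfer uniform continuity across the correspondence, using the inequalities $2^{-a}|r(a) - r'(a)| \,\leq\, d(r,r')$ and (when $r, r'$ agree off of $\{a\}$) $d(r,r') = 2^{-a} |r(a) - r'(a)|$. From the first inequality, if $F$ is uniformly continuous on $\rint^\nat$, then choosing $\delta$ for $\epsilon$ and setting $\delta' := 2^{-a}\delta$ on the $\rint$ side yields a modulus for $f$. For the converse, given a modulus $\delta$ for $f$ at tolerance $\epsilon$, the condition $d(r,r') < \delta$ forces $|r(a) - r'(a)| < 2^a \delta$, but it suffices to pick witnesses $r, r'$ agreeing off $\{a\}$ in the definition of $f$, so we instead use the equality form of the metric and get a modulus of $2^{-a}\delta$ for $F$. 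This is the main (but still short) obstacle, just a matter of bookkeeping the factor $2^{-a}$.

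For part 2, I would simply apply lemma \ref{lem:nervsupp} with $A = \emptyset$: the condition $r|_\emptyset = r'|_\emptyset$ is vacuous, so $\emptyset$ is a support for $F$ precisely when $F$ is a constant function on $\rint^\nat$. Under the correspondence of part 1, this holds iff $f(t) = F(r)$ is independent of $t$, i.e.\ iff $f$ is constant, which closes the proof.
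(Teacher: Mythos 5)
Your proposal is correct and follows the same route the paper intends: the paper gives no explicit proof, only the remark that Lemma \ref{lem:ctsfunctionsnerve} follows from Lemma \ref{lem:nervsupp}, which is exactly your argument with $A = \{a\}$ and $A = \emptyset$ plus the routine transfer of uniform continuity. The only quibble is that you attach the witness-choice (taking $r, r'$ agreeing off $\{a\}$, so $d(r,r') = 2^{-a}|r(a)-r'(a)|$) to the wrong direction of the modulus bookkeeping, but the stated moduli are still sufficient, so nothing is actually broken.
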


\begin{proposition}
  \label{prop:nervefibrant}
  Let $X$ be a complete metric space. Then the unique map $N(X)
  \rightarrow 1$ is a fibration. (We say that $N(X)$ is
  \emph{fibrant}.)
\end{proposition}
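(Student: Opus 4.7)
The approach is to construct the Kan filling operator $\uparrow$ for $1$-open boxes; the operator $\downarrow$ for $0$-open boxes is obtained by the same argument with $0$ and $1$ interchanged. Given a $1$-open $(A, a)$-box $(u, *)$ over $N(X) \to 1$, I need to produce an element $F \in N(X)$ satisfying $F(b := i) = u(b, i)$ for each $(b, i) \in (A \times 2) \setminus \{(a, 1)\}$. Using Lemma \ref{lem:nervsupp}, such an $F$ corresponds to a uniformly continuous function $\rint^A \to X$ (after ignoring the coordinates outside $A$), and the filling condition becomes the requirement that $F$ restricted to the face $\{r(b) = i\}$ of $\rint^A$ coincides with $u(b, i)$.

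The plan is to obtain $F$ as a composite $F(r) := g(\rho(r))$, where $g : O \to X$ is the uniformly continuous function obtained by gluing the $u(b, i)$'s along their common restrictions (well-defined by the compatibility condition $u(b, i)(b' := i') = u(b', i')(b := i)$), and $\rho : \rint^A \to O$ is a uniformly continuous retraction onto the ``open box'' subspace $O \subseteq \rint^A$, namely the union of the faces $\{r(b) = i\}$ for $(b, i) \in (A \times 2) \setminus \{(a, 1)\}$. Such a $\rho$ can be constructed by induction on $|A|$, using piecewise-linear formulas that treat the coordinates of $A \setminus \{a\}$ symmetrically: one first retracts the missing face $\{r(a)=1\}$ onto its own boundary (which lies in $O$) and then extends along the $a$-direction. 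Completeness of $X$ will enter to ensure that $F = g \circ \rho$ is genuinely an element of $N(X)$, in particular that any Cauchy sequences arising in the uniform-continuity estimates for $F$ converge inside $X$. The resulting $F$ has support contained in $A$ and satisfies $F(b := i) = u(b, i)$, because $\rho$ is the identity on $O$.

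The main obstacle is arranging $\rho$ to satisfy the uniformity conditions required of a Kan filling operator: equivariance under the action of $\perma$, and commutation with substitution $(c := i)$ for $c \# A$. Equivariance forces a symmetric treatment of the coordinates of $A \setminus \{a\}$, ruling out simple strategies like projecting along a single coordinate or radial projection from a single interior point (which also fails to be uniformly continuous near the projection centre); the inductive construction must therefore be arranged so that $\rho$ is invariant under every permutation fixing $a$. Commutation with substitution of names fresh from $A$ is then automatic because $\rho$ and $g$ only depend on the $A$-coordinates, and equivariance in the name $a$ itself follows because the construction refers to $a$ only through its role as the distinguished missing direction of the box.
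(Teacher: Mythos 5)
Your overall skeleton matches the paper's: glue the faces $u(b,i)$ into a single uniformly continuous map on the union of faces, precompose with a retraction of $\rint^A$ onto that union built piecewise linearly and symmetrically in the coordinates of $A\setminus\{a\}$ so that the uniformity conditions hold, and handle $0$-open boxes symmetrically. However, there is a genuine gap at the central constructive step: you assert a \emph{total} uniformly continuous retraction $\rho : \rint^A \rightarrow O$ onto the open-box subspace, defined by piecewise-linear formulas. Constructively this is not available. A piecewise-linear definition proceeds by case distinctions on comparisons of real numbers, which are not decidable; such a definition only yields a function on the dense subset of points where the relevant comparisons are decided. Nor can you repair $\rho$ itself by extending from that dense subset, because its codomain $O$, a finite union of closed faces, is not provably complete: deciding which face a limit point lies in is exactly an LLPO-type problem (compare the space $\{x \in [-1,1] \mid x \leq 0 \vee x \geq 0\}$ discussed in the remark following this proposition, citing \cite{palmgrenfh}).

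This also shows that your stated role for completeness of $X$ does not correspond to an actual step: if a total uniformly continuous $\rho$ existed, then $g \circ \rho$ would automatically be total and uniformly continuous, no Cauchy sequences in $X$ would need to converge, and $N(X)$ would be fibrant for an \emph{arbitrary} metric space $X$ --- contradicting the fact (via the path joining property and \cite[Proposition 2.3]{palmgrenfh}) that fibrancy of the nerve of a space such as $\{x \in [-1,1] \mid x \leq 0 \vee x \geq 0\}$ cannot be established without LLPO. The paper's proof places completeness where it is genuinely needed: the retraction is defined only on a dense subset $D \subseteq \rint^A$ (chosen symmetrically to secure the uniformity conditions), one composes with the glued map into $X$ to get a uniformly continuous function from a dense subset of $\rint^\nat$ into the \emph{complete} space $X$, and only then extends to all of $\rint^\nat$ by the extension theorem for uniformly continuous functions into complete metric spaces. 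Your argument goes through classically, but as written it does not establish the proposition in the constructive setting this paper requires (and in which the proposition is later used, e.g.\ in the proof of theorem \ref{thm:nonarlm}).
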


\begin{proof}
  We are given an open box $u : A \times 2 \setminus (a, 1)$ for some
  finite set $A \subseteq \nat$ and $a \in A$ and we need to construct
  a filler in such a way that this can be done uniformly.

  For each $(b, i) \in A \times 2$, define $X_{b, i} \subseteq
  \rint^A$ by
  \begin{equation}
    \label{eq:16}
    U_{b, i} := \{ r \in \rint^A \;|\; r(b) = i \}
  \end{equation}
  Note that we can easily view $u$ as a uniformly continuous function
  \begin{equation}
    \label{eq:17}
    \bar{u} : \bigcup_{(b, i) \in A \times 2 \setminus (a, 1)} U_{b,
      i} \quad \times \quad \rint^{\nat \setminus A} \quad \rightarrow
    \quad X
  \end{equation}

  Note that one can piecewise linearly define a uniformly continuous
  retraction, $\tau$ from a dense subset $D$ of $\rint^A$ to
  $\bigcup_{(b, i) \in A \times 2 \setminus (a, 1)} U_{b, i}$. We then
  define a function
  $\tau' : D \times \rint^{\nat \setminus A} \rightarrow \bigcup_{(b, i) \in A
    \;\times\; 2 \setminus (a, 1)} U_{b, i} \times \rint^{\nat \setminus
    A}$
  by ``passing through $\rint^{\nat \setminus A}$ unchanged.''  Then
  the composition $\bar{u} \circ \tau'$ is a uniform continuous
  function from a dense subset of $\rint^\nat$ to a complete metric
  space. Under these conditions one can extend $\bar{u} \circ \tau$ to
  a uniformly continuous function $\rint^\nat \rightarrow X$ (for
  example, see \cite[Chapter 4, Lemma 3.7]{bishopbridges}).

  To ensure the uniformity conditions for the Kan filling operator,
  note that we can construct piecewise linearly for each finite
  $A \subseteq \nat$ and $a \in A$, dense subsets
  $U_A \subseteq \rint^A$ and retractions
  $\tau_A : D_A \rightarrow \bigcup_{(b, i) \in A \times 2 \setminus
    (a, 1)} U_{b, i}$ with the following symmetry property. Let $\pi
  \in \perma$ and write $\bar{\pi}$ for the function $\rint^{\pi A}
  \rightarrow \rint^A$ induced by composition. Then $D_{\pi A} =
  \bar{\pi}^{-1}(D_A)$ and $\tau_{\pi A} = \bar{\pi}^{-1} \circ \tau_A
  \circ \bar{\pi}$. If we use retractions with these
  symmetry conditions in the proof above we ensure the uniformity
  conditions are satisfied.
\end{proof}

\begin{remark}
  When we try to define a function on $\rint^\nat$ piecewise linearly,
  the best we can do constructively in general is to define the
  function on a dense subset. This is why we work with dense subsets
  of $\rint^\nat$ in the proof of proposition \ref{prop:nervefibrant},
  and why we need the extra assumption of completeness of $X$. See
  \cite{palmgrenfh}, where Palmgren discusses a similar issue for a
  related property called the \emph{path joining property}. In fact
  the path joining property follows from the fibrancy of the nerve of
  a metric space, so by \cite[Proposition 2.3]{palmgrenfh} we
  cannot show the nerve of $\{x \in [-1, 1] \;|\; x \leq 0 \vee x \geq
  0 \}$ is fibrant without assuming LLPO (and also we cannot show
  that this space is complete without assuming LLPO).

  As in \cite{palmgrenfh}, another approach that promises to be better
  behaved for more general results is to use formal topologies instead
  of metric spaces. In this paper all the examples we use will be
  complete metric spaces.
\end{remark}

\subsection{Algebraic Weak Factorisation Systems}
\nocite{riehlams}
\nocite{grandistholennwfs}

Weak factorisation systems are widely used in homotopical
algebra, and are defined as follows.

\begin{enumerate}
\item Let $\catc$ be a category, and let $i : U \rightarrow V$ and $f
  : X \rightarrow Y$ be morphisms in $\catc$. We say $i$ has the
  \emph{left lifting property} with respect to $f$, and $f$ has the
  \emph{right lifting property} with respect to $i$ and write $i
  \pitchfork f$ to mean that for every commutative square of the
  following form,
  \begin{equation}
    \label{eq:19}
    \begin{gathered}
      \xymatrix{ U \ar[r] \ar[d]_i & X \ar[d]_f \\
        V \ar[r] & Y }
    \end{gathered}
  \end{equation}
  there exists a diagonal map $j : V \rightarrow X$ making two
  commutative triangles.
\item Let $\mathcal{M}$ be a class of morphisms. We define the
  following classes.
  \begin{align}
    M^\pitchfork &:= \{ f \;|\; (\exists i \in \mathcal{M})\,i
                   \pitchfork f \} \\
    {}^\pitchfork M &:= \{ i \;|\; (\exists f \in \mathcal{M})\,i
                   \pitchfork f \}
  \end{align}
\item For classes $\mathcal{M}$ and $\mathcal{N}$ we write
  $\mathcal{M} \pitchfork \mathcal{N}$ to mean that for all $i \in
  \mathcal{M}$ and for all $f \in \mathcal{N}$ we have $i \pitchfork
  f$.
\item A \emph{weak factorisation system} is two classes of maps
  $\mathcal{L}$ and $\mathcal{R}$ such that $\mathcal{L} =
  {}^\pitchfork \mathcal{R}$, $\mathcal{R} = \mathcal{L}^\pitchfork$
  and every morphism in $\catc$ factors as a morphism in $\mathcal{L}$
  followed by a morphism in $\mathcal{R}$.
\end{enumerate}

Algebraic weak factorisation systems (originally called
natural weak factorisation systems) are a variation developed by
Grandis and Tholen in \cite{grandistholennwfs}. Garner showed in
\cite{garnersmallobject} that a form of the small object argument can
be used to construct awfs's from a diagram of left maps. However, the
proof uses transfinite arguments that may be problematic
constructively.  Riehl used awfs's in \cite{riehlams} as the main
ingredient in the theory of algebraic model structures; the same
paper contains a comprehensive introduction to awfs's.

We recall the definition of awfs below. We write $\btwo$ and $\bthree$
for the categories given by linear orderings with $2$ elements and $3$
elements respectively.

\begin{definition}
  Let $\catc$ be a category. Note that there is a canonical functor
  $\catc^\bthree \rightarrow \catc^\btwo$ given by composition. A
  \emph{functorial factorisation} on $\catc$ is a functor $\catc^\btwo
  \rightarrow \catc^\bthree$ that is a section of the composition
  functor.
\end{definition}

Throughout this paper we will write out functorial factorisations as
three separate components: a functor $K : \catc^\btwo \rightarrow
\catc$ together with maps $\lambda_f$ and $\rho_f$ for each morphism
$f : X \rightarrow Y$ in $\catc$, as in the diagram
below:
\begin{equation}
  \begin{gathered}
    \label{eq:2}
    \xymatrix{ X \ar[rr]^f \ar[dr]_{\lambda_f} & & Y \\
      & K f \ar[ur]_{\rho_f} & }
  \end{gathered}
\end{equation}

For each functorial factorisation we may define a copointed
endofunctor $L : \catc^\btwo \rightarrow \catc^\btwo$, whose action on
objects is given by sending $f$ to $\lambda_f$, and a pointed
endofunctor $R : \catc^\btwo \rightarrow \catc^\btwo$, whose action on
objects is given by sending $f$ to $\rho_f$. For full details see
\cite{grandistholennwfs}, \cite{garnersmallobject} or
\cite{riehlams}.

\begin{definition}[Grandis, Tholen]
  Let $\catc$ be a category. An \emph{algebraic weak factorisation
    system on $\catc$} consists of a functorial factorisation
  $K, \lambda, \rho$, together with natural transformations
  $\Sigma : L \rightarrow L^2$ and $\Pi : R^2 \rightarrow R$ such that
  $L$ together with $\Sigma$ form a comonad, $\monadf{L}$ (with
  $\Sigma$ the comultiplication map), and $\Pi$ together with
  $R$ form a monad $\monadf{R}$.
\end{definition}

Note in particular that the multiplication on $R$ gives us an
$R$-algebra structure on $\rho_f$ for any $f$ (as is the case for any
monad), and dually comultiplication gives $\lambda_f$ the structure of
an $L$-coalgebra for every $f$.

\section{Construction of Functorial Factorisation}

\subsection{Construction of Factorisation}
\label{sec:konobjs}

We now define the awfs on $\sub$. The basic idea is the same as
Garner's small object argument, as in \cite{garnersmallobject}, but is
somewhat simpler here than in general. This can also be seen as a
generalisation of Kan completion, as defined by Huber in
\cite[Section 3.5]{huberthesis}.

We first define the functor $K : \sub^\btwo \rightarrow \sub$ that
will provide the objects of the functorial factorisation $\sub^\btwo
\rightarrow \sub^{\mathbf{3}}$. The basic intuition here is that we
know the map $\rho : K f \rightarrow Y$ should be a Kan
fibration. Hence we freely add fillers for open boxes to $K f$ to
ensure this is the case. We use the set of open boxes itself to do
this, with the idea that each open box is its own filler. In order for
$K f$ to be an object in $\sub$ we need to ensure that if $(u, y)$ is
a $1$-open $(A, a)$-box that has been added to $K f$, then the
substitution $(u, y)(a := 1)$ is well defined. For this, we add
another component $K^+ f$ which is a subset of $[\names] K f$,
corresponding to what Bezem, Coquand and Huber refer to as \emph{Kan
  composition} in \cite{bchcubicalsets}. The remaining substitutions
are already determined by the conditions on Kan filler operations,
including the uniformity conditions so we don't need to add anything
more regarding $1$ open boxes. We then do the same thing for $0$-open
boxes and iterate $\omega$ times.

Fix $f : X \rightarrow Y$.  We will define $K f$ in $\omega$
stages. For each $n \in \omega$, we will define inductively $K_n f \in
\sub$. We will simultaneously define $\rho_n : K_n f \rightarrow Y$
and $\lambda_n : K_n f \rightarrow K_{n + 1} f$. We will also ensure
that $\rho_{n+1} \circ \lambda_n = \rho_n$.

If we have already defined
$K_m f$ for $m < n$ then we define $\colim_{m < n} K_m f$ to be the
colimit over $K_m f$ for $m < n$ together with the maps $\lambda_m$.
We now define $K_n f$, assuming that $K_m f$ has already been defined
for $m < n$. We first define nominal sets $K_n^\uparrow f$, $K_n^+ f$,
$K_n^\downarrow f$ and $K_n^- f$.

\begin{enumerate}
\item Define $K_n^\uparrow f$ to be pairs $(u, y)$ where $u$ is a $1$-open box
in $\colim_{m < n} K_m f$ over $y$. The action of permutations on
$K_n^\uparrow f$ is defined componentwise.
\item Define $K_n^+ f$ to be the subset of $[\names] K_n^\uparrow f$
  consisting of $\langle a \rangle (u, y)$ where $u$ is a $1$-open $A,
  a$-box in $\colim_{m < n} K_m f$ over $y$. More formally, we define
  an equivariant subset of $\names \times K_n^\uparrow f$ as the set
  of pairs $(a, (u, y))$ where $u$ is an $A, a$-box for some finite
  $A$ with $a \in A$, and then define $K_n^+ f$ to be image of this
  subset under the projection $\names \times K_n^\uparrow f
  \rightarrow [\names]K_n^\uparrow f$.
\item Define $K_n^\downarrow f$ analogously to $K_n^\uparrow f$ but
  for $0$-open boxes.
\item Define $K_n^- f$ analogously to $K_n^+ f$ but for $0$-open
  boxes.
\end{enumerate}

We now define $K_n f$ to be coproduct $X \amalg K_n^\uparrow f
\amalg K_n^+ f \amalg K_n^\downarrow f \amalg K_n^- f$ in
nominal sets. Since we have not defined a substitution structure on
the individual components, it would not make sense to use the
coproduct in $\sub$. We will show below how to define a substitution
structure on $K_n f$.

Note that there is a
natural injection of $K_{n - 1}^\uparrow f$ into $K_n^\uparrow f$, and
similarly for the other components of the disjoint union. Hence we can
define $\lambda_{n - 1}$ componentwise. We also define $\rho_n$
componentwise, as follows.
\begin{enumerate}
\item If $x \in X$, we define $\rho_n(x) = f(x)$.
\item If $(u, y) \in K_n^\uparrow f$ is a $1$-open $A, a$-box, we
  define $\rho_n(u, y) = y$.
\item If $\langle a \rangle (u, y) \in K^+_n f$, we define
  $\rho_n(\langle a \rangle (u, y)) := y (a := 1)$. Formally, to show
  this is well defined, first use the above description to get a
  partial function on pairs $(a, (u, y))$ where $(u, y)$ is a $1$-open
  $A, a$ box and apply lemma \ref{lem:nameabsext}. Note that $a$ is
  fresh for $y(a := 1)$ by the axioms for $01$-substitution sets.
\item For $K_n^\downarrow f$, we define $\rho$ analogously to
  $K_n^\uparrow f$.
\item For $K_n^- f$, we define $\rho$ analogously to
  $K_n^+ f$.
\end{enumerate}

Since $K_n f$ was defined as a coproduct in nominal sets, we already
have an action of $\perma$ on $K_n f$ (which is just defined
componentwise), but we still need to define the action of
substitutions. If $x$ is in the ``copy'' of $X$ in $K_n f$, then
define $x (a := i)$ to be the same as in $X$ itself. If $(u, y)$ is an
element of $K_n^\uparrow f$, with $u$ an $A, a$ box then define
$(u,y)(a' := i)$ by cases. In the below, we write $\lambda_{m < n}$ to
mean the natural injection of $\colim_{m < n} K_m f$ into $K_n f$.
\begin{equation}
(u,y)(a' := i) :=
\begin{cases}
  \lambda_{m < n} (u(a',i)) 
  & \text{if } (a',i) \in A \times 2 \setminus (a,1) \\
  \langle a \rangle (u, y) \in K_n^+ f
  & \text{if } a' = a \text{ and } i = 1 \\
  (u (a' := i), y (a' := i)) \in K_n^\uparrow f
  & \text{otherwise}
\end{cases}
\end{equation}
Substitution for $K_n^\downarrow f$ is defined similarly.

If $\langle a \rangle (u, y)$ is an element of $K_n^+ f$, with $u$ a
$1$-open $A,a$-box, then we define $\langle a \rangle (u, y) (a' :=
i)$ as follows. Note that by applying corollary
\ref{cor:nameabsextexp} we may assume without loss of generality that
$a' \neq a$.
\begin{equation}
\langle a \rangle (u, y) (a' := i) :=
\begin{cases}
  \lambda_{m < n} (u(a', i) (a := 1)) & \text{if } a' \in A \\
  \langle a \rangle (u (a' := i), y (a' := i)) & \text{otherwise}
\end{cases}
\end{equation}
To check that we do satisfy the conditions of corollary
\ref{cor:nameabsextexp}, we note that in both cases $a$ is fresh for
$\langle a \rangle (u, y) (a' := i)$.  Substitution for $K_n^- f$ is
defined similarly.

To show that $K_n f$ is a 01-substitution set, we need to check the
following axioms.
\begin{gather}
  \pi (x (a := i)) = \pi x (\pi a := i) \\
  a \# x (a := i) \label{subfresh} \\
  a \# x \Rightarrow x (a := i) = x \\
  a \# a' \Rightarrow x (a := i) (a' := i') = x (a' := i') (a := i)
  \label{subcomm}
\end{gather}
These can be checked by induction on $n$, splitting into cases
depending on which disjoint component of $K_n f$ $x$ lies in. For
\eqref{subfresh} and \eqref{subcomm} it is necessary to use
the freshness and adjacency conditions respectively in the definition
of open box.

We now define $K f$ to be the colimit over all $K_n f$ (together with
the injections $\lambda_n$). We also have an obvious morphism
$\lambda_f : X \rightarrow K f$ given by inclusion $X$ into $K_0 f$,
and a morphism $\rho_f : K f \rightarrow Y$ using the $\rho_n$.

\subsection{Rank is Well Defined}

Note that for every $n$ there is a canonical map $\lambda_{n < \omega}
: \colim_{m < n} K_n f \rightarrow K f$. In the following lemma we show
that for every $x \in K f$ there is a least $n$ such that $x$ appears
in the image of such a map. Note that constructively, we are forced to
find this $n$ explicitly, rather than appealing to the well ordering of
$\omega$.

\begin{lemma}
  \label{lem:rank}
  For every $x \in K f$, there is a least $n$ such that there is $x'
  \in K_n f$ such that $\lambda_{n < \omega} (x') = x$. Furthermore, $x'$ is
  uniquely determined.
\end{lemma}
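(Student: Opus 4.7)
The plan is to define a rank function by structural recursion along the inductive construction of $Kf$, show it computes the least $n$, and read off uniqueness from injectivity of the connecting maps.

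First, I would verify that each $\lambda_m : K_m f \to K_{m+1} f$ is a monomorphism with decidable image. This holds because $K_{m+1} f$ is built as a coproduct in nominal sets of five summands, and $\lambda_m$ sends each summand of $K_m f$ into the corresponding summand of $K_{m+1} f$ via an obvious inclusion (for instance $K_m^\uparrow f \hookrightarrow K_{m+1}^\uparrow f$, since a $1$-open box in $\colim_{j<m} K_j f$ is in particular a $1$-open box in $\colim_{j<m+1} K_j f$). Coproducts in nominal sets being decidable disjoint unions, this yields the required decidable injectivity at each stage; passing to the sequential colimit shows each induced map $K_n f \to K f$ is also an injection, and one may identify $\colim_{j<n} K_j f$ with $K_{n-1} f$ for $n \geq 1$.

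Next, by induction on $n$ I would define functions $r_n : K_n f \to \nat$ compatible with the $\lambda_n$. The base case is $K_0 f = X$, where $r_0 \equiv 0$. Using the coproduct decomposition at stage $n \geq 1$: keep $r_n = 0$ on the $X$ summand; on $K_n^\uparrow f$, send $(u, y)$ to $1 + \max_{(b, i)} r_{n-1}(u(b, i))$ (the indexing set is finite and each $u(b, i) \in K_{n-1} f$ by construction); similarly for $K_n^\downarrow f$. For the name-abstraction summands $K_n^+ f$ and $K_n^- f$, apply Lemma \ref{lem:nameabsext} to the assignment $(a, (u, y)) \mapsto 1 + \max_{(b, i)} r_{n-1}(u(b, i))$; the freshness hypothesis $a \# r_n(a, (u, y))$ is automatic since the output is a natural number, which is equivariant. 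Compatibility $r_{n+1} \circ \lambda_n = r_n$ is immediate from the formulas, so the $r_n$ assemble into a function $\operatorname{rank} : K f \to \nat$.

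Finally, I would prove by induction on $n$ that (i) $r_n(x_n) \leq n$ for every $x_n \in K_n f$; (ii) each such $x_n$ is in the image of $K_{r_n(x_n)} f \to K_n f$; and (iii) $x_n$ is not in the image of $K_m f \to K_n f$ for any $m < r_n(x_n)$ — item (iii) uses the disjointness of the five summands together with the fact that a genuinely new element of $K_n^\uparrow f$ is one whose components have maximal rank exactly $n-1$. Translating through the colimit gives that $\operatorname{rank}(x)$ is the least $n$ for which $x$ lies in the image of $K_n f \to K f$, and uniqueness of the preimage $x'$ is then immediate from the injectivity established in step one. The main obstacle is bookkeeping: verifying that the coproduct decomposition survives the colimit cleanly and that rank descends to $\alpha$-equivalence classes in the $K_n^\pm f$ summands — the latter is precisely what Lemma \ref{lem:nameabsext} supplies.
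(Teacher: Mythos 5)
Your proposal is essentially the paper's proof: the paper also argues by induction on the construction, taking the least $n$ for an element of $K_n^\uparrow f$ (or the other box components) to be one more than the maximum of the least ranks of its finitely many faces, and using injectivity of the $\lambda_m$ (hence of the colimit legs $\lambda_{n<\omega}$) both for the uniqueness of $x'$ and to see that the lower-level box genuinely satisfies the freshness and adjacency conditions. The one quibble is your step-one assertion that decidable image of $\lambda_m$ follows merely from the summands forming a decidable disjoint union --- deciding membership in the image \emph{within} a given summand requires its own induction on the faces of a box --- but this claim is not actually needed for the present lemma (only injectivity is used), and in the paper it appears downstream as a consequence of well-definedness of rank, in lemma \ref{lem:cofibimdec}.
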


\begin{proof}
  We will show by induction that for every element $x$ of $\coprod_{m <
    \omega} K_m f$, the statement holds for the equivalence class
  $[x]$.

  Suppose that we have proved the statement for $[y]$ for all $y \in
  \coprod_{m' < m} K_{m'} f$. Now given $x \in K_m f$, note that $x$ must
  belong to one of the five components in the disjoint union forming
  $K_m f$. If $x \in X$, then note that we can take $n$ to be $0$ and
  $x'$ to be $x$.

  If $x$ is of the form $(u, y)$ where $u$ is a $1$-open $A, a$-box,
  then note that for every $(a',i) \in A \times 2 \setminus (a,1)$, we
  have that $u(a',i) \in \colim_{m' < m} K_{m'} f$. Given $(a', i)$ in
  $A \times 2 \setminus (a,1)$, let $w$ be a representative of the
  equivalence class $u(a',i)$. Note that $w$ is an element of $K_{m'}
  f$ for some $m' < m$. Hence by induction there is $n$ least such
  that there is $z$ in $K_{n} f$ with $z$ equivalent to $w$. Note that
  $z$ and $n$ are independent of the choice of equivalence class
  representative $w$ and so they depend only on $(a',i)$. We write
  them now as $z_{a',i}$ and $n_{a',i}$. Since $\{n_{a'i} \;|\; (a',i)
  \in A \times 2 \setminus (a,1) \}$ is a finite subset of $\omega$ it
  must have a greatest element, $N$. Then we can take $n$ in the
  statement of the lemma to be $N + 1$. We take $x'$ to be given by
  $(u, y)$ where $u$ is the open box with $u(a',i) := z_{a'i}$. Note
  in particular that since each $\lambda_{n < m}$ is an injection, so
  is each map $\lambda_{n < \omega}$ and so $u$ satisfies the
  adjacency conditions and hence really is an open box.

  If $x$ belongs to one of the remaining 3 components, then the proof
  is similar to that of $K_n^\uparrow f$.
\end{proof}

\begin{definition}
  \label{def:rank}
  Given $x \in K f$, we refer to the $n$ in lemma \ref{lem:rank} as
  the \emph{rank} of $x$.
\end{definition}

\begin{remark}
  We will see later that the fact that rank is a well defined natural
  number implies that all acyclic cofibrations have decidable image
  (lemma \ref{lem:cofibimdec}). Constructively, this is not the case
  in general for cofibrantly generated awfs's.
\end{remark}

\subsection{Functoriality of $K$}

In section \ref{sec:konobjs} we defined $K$ on the objects of
$\sub^\btwo$. We now complete the construction of the functor by
defining the action of $K$ on the morphisms of $\sub^\btwo$.

Given $f: X \rightarrow Y$ and $g : U \rightarrow V$ objects of
$\sub^\btwo$, recall that a morphism from $f$ to $g$ is a commutative
square
\begin{equation}
\begin{gathered}
\xymatrix{ X \ar[r]^h \ar[d]_f & U \ar[d]^g \\
  Y \ar[r]_k & V
}
\end{gathered}
\end{equation}

We also need to check that the following diagram commutes (in order to
ensure that the overall factorisation $\sub^\btwo \rightarrow
\sub^{\mathbf{3}}$ is a functor).
\begin{equation}
\label{eq:khknat}
\begin{gathered}
\xymatrix{ X \ar[r]^h \ar[d]_{\lambda_f} & U \ar[d]^{\lambda_g} \\
  K f \ar[r]^{K(h,k)} \ar[d]_{\rho_f} & K g \ar[d]^{\rho_g} \\
  Y \ar[r]^k & V
}
\end{gathered}
\end{equation}
We will simultaneously check this while defining $K(h,k)$. One can
also check at the same time that $K(h,k)$ is equivariant and preserves
substitution, ie that it actually is a morphism in $\sub$.

We define $K(h,k) : K f \rightarrow K g$ by induction on rank. If $x
\in K f$ has rank $0$, then it is an element of $X$. We define $K(h,k)
(x)$ to be $h(x)$. Note that this precisely ensures that the upper
square of \eqref{eq:khknat} commutes.

If $x$ is of the form $(u, y)$ where $u$ is a 1-open $A,a$-box, then
note that for each $(a', i) \in A\times 2 \setminus (a, 1)$ we have
that $u(a',i)$ is of strictly lower rank than $x$, and so we may
assume that $K(h,k)(u(a',i))$ has already been defined. Note that we
have a 1-open $A, a$-box given by $K(h,k) \circ u$. Furthermore, note
that by applying the lower square of \eqref{eq:khknat} to each
$u(a',i)$ we have that $K(h,k) \circ u$ is an open box over
$k(y)$. Hence $(K(h,k) \circ u, k(y))$ is an element of $K g$, and so
we can take $K(h,k) (x)$ to be $(K(h,k) \circ u, k(y))$. Note that we can now
easily see that the lower square of \eqref{eq:khknat} holds
``locally'' at $x$.

Given $\langle a \rangle (u, y) \in K^+ f$, we define $K(h,k)$ to be 
$\langle a \rangle (K(h,k) \circ u, k(y))$, noting that we can use
lemma \ref{lem:nameabsext} to ensure we get a well defined function.

We can similarly define $K(h,k)$ on elements of
$K^\downarrow f$ and $K^- f$.

Finally note that one can easily show by induction that $K$ preserves
composition and identities.

\section{Monad and Comonad Structure}

\subsection{$\monadf{L}$ is a Comonad}

We define the comonad $\monadf{L}$ as follows. We need to define a
comultiplication morphism $\Sigma : L \rightarrow L^2$. Given a
morphism $f : X \rightarrow Y$, recall that $L f = \lambda_f : X
\rightarrow K f$. This means that $L^2 f$ is the morphism
$\lambda_{\lambda_f} : X \rightarrow K \lambda_f$. We will define
$\sigma_f$ such that $\Sigma_f$ will be the commutative square
\begin{equation}
  \label{eq:sigmadef}
  \begin{gathered}
    \xymatrix{ X \ar@{=}[r] \ar[d]_{\lambda_f} & X \ar[d]^{\lambda_{\lambda_f}} \\
      K f \ar[r]_{\sigma_f} & K \lambda_f
      }
  \end{gathered}
\end{equation}
We inductively define $\sigma_f : K f \rightarrow K \lambda_f$, by
taking it to be the identity on $X$ (note that this implies that
\eqref{eq:sigmadef} is commutative). Then given an element, $(u, y)$
of $K_n^\uparrow f$, send it to $(\sigma_f \circ u, (u,y))$ in $K
\lambda$. Given an element $\langle a \rangle (u, y)$ of $K^+ f$, send
it to $\langle a \rangle (\sigma_f \circ u, (u, y))$ (which is well
defined by lemma \ref{lem:nameabsext}), and similarly
for the remaining components of $K_n f$). Checking that $\monadf{L}$
is indeed a comonad amounts to checking the following three diagrams
commute:
\begin{gather}
  \begin{gathered}
    \xymatrix{ K f \ar@{=}[dr] & K \lambda_f \ar[l]_{\rho_{\lambda_f}}
     \\
    & K f \ar[u]_{\sigma_f}
  }
\end{gathered}  \label{eq:comonad1} \\
\begin{gathered}
  \xymatrix{ K f \ar@{=}[dr] & K \lambda_f \ar[l]_{K \Phi_f}
     \\
    & K f \ar[u]_{\sigma_f}
  }
\end{gathered} \label{eq:comonad2} \\
\begin{gathered}
  \xymatrix{ K \lambda_{\lambda_f} 
     & K \lambda_f  \ar[l]_{K \Sigma_f} \\
    K \lambda_f \ar[u]^{\sigma_{\lambda_f}} & K f \ar[u]_{\sigma_f} \ar[l]^{\sigma_f}
    }
\end{gathered} \label{eq:comonad3}
\end{gather}
All three can be verified by induction on rank. To illustrate, we
check the comultiplication law, \eqref{eq:comonad3} below. The case
when we are given an element of $X$ is clear, so it remains to check
the cases where we are given an element of the form $(u, y)$ or
$\langle a \rangle (u, y)$. We will just verify the case $(u, y)$; the
other case is similar.
\begin{align}
  K \Sigma_f \circ \sigma_f (u, y) &= K \Sigma_f (\sigma_f \circ u, (u,
  y)) \\
  &= (K \Sigma_f \circ \sigma_f \circ u, \sigma_f(u, y)) \\
  &= (K \Sigma_f \circ \sigma_f \circ u, (\sigma_f \circ u, (u, y))) \\
  \sigma_{\lambda_f} \circ \sigma_f (u, y) &=
  \sigma_{\lambda_f} (\sigma_f \circ u, (u, y)) \\
  &= (\sigma_{\lambda_f} \circ \sigma_f \circ u, (\sigma_f \circ u,
  (u, y))) \\
  &= (K \Sigma_f \circ \sigma_f \circ u, (\sigma_f \circ u,
  (u, y))) &\text{by induction on rank} \\
  &= K \Sigma_f \circ \sigma_f (u, y)
\end{align}

\subsection{$\monadf{R}$ is a Monad}

To define the monad $\monadf{R}$, we need to define a multiplication
map $\Pi : R^2 \rightarrow R$. Given $f : X \rightarrow Y$, $R f$ is
of the form $\rho_f : K f \rightarrow Y$, and so $R^2 f$ is of the
form $\rho_{\rho_f} : K \rho_f \rightarrow Y$. We will define $\pi_f$
so that $\Pi_f$ is the commutative diagram
\begin{equation}
  \label{eq:pidef}
  \begin{gathered}
    \xymatrix{ K \rho_f \ar[r]^{\pi_f} \ar[d]_{\rho_{\rho_f}}  & K f
       \ar[d]^{\rho_f} \\
      Y \ar@{=}[r] & Y }
  \end{gathered}
\end{equation}

We define $\pi_f$ inductively. If $x \in K \rho$ is already an element
of $K f$, then we can take $\pi_f x$ to be $x$. If $x \in K \rho_f$ is
of the form $(u, y)$, then note that $\pi_f \circ u$ is an open box
over $y$, and so we can take $\pi_f x$ to be $(\pi_f \circ u, y)$. To
check that this is a monad, we need to verify the commutativity of the
following diagrams.

\begin{gather}
  \begin{gathered}
    \xymatrix{ Kf \ar[r]^{\lambda_{\rho_f}} \ar@{=}[dr] & K \rho_f
      \ar[d]^{\pi_f} \\
      & K f }
  \end{gathered} \\
  \begin{gathered}
    \xymatrix{ K f \ar[r]^{K \Lambda_f} \ar@{=}[dr] & K \rho_f
      \ar[d]^{\pi_f} \\
      & K f }
  \end{gathered} \\
  \begin{gathered}
    \xymatrix{ K \rho_{\rho_f} \ar[r]^{K \Pi_f} \ar[d]_{\pi_{\rho_f}}
      & K \rho_f \ar[d]^{\pi_f} \\
      K \rho_f \ar[r]_{\pi_f} & K f }      
  \end{gathered}
\end{gather}
As before, each diagram can be checked by induction on rank.

\section{Kan Fibrations}

Every algebraic weak factorisation system gives rise to a weak
factorisation system $(\bar{\mathcal{L}}, \bar{\mathcal{R}})$ where
$\bar{\mathcal{L}}$ and $\bar{\mathcal{R}}$ are the retract closures
of $\monadf{L}$-maps and $\monadf{R}$-maps respectively (the fact that
this is indeed a weak factorisation system follows from standard
results in homotopical algebra). In fact $\bar{\mathcal{L}}$ and
$\bar{\mathcal{R}}$ can also be characterised as the class of maps
that admit $\monadf{L}$-coalgbra and $\monadf{R}$-algebra structures
when $\monadf{L}$ and $\monadf{R}$ are viewed as a copointed and
pointed endofunctor respectively, that is, when only the counit and
unit law are required and not the comultiplication and multiplication
law (see \cite[Lemma 2.8]{riehlams}). We will check that
$\bar{\mathcal{R}} = \mathcal{R}$ and show that this class is
precisely the class of Kan fibrations in the sense of \cite{pittssub},
and in fact $\monadf{R}$ algebra structures correspond exactly to Kan
filling operations.

\begin{lemma}
  \label{lem:conskanfill}
  Suppose that $f : X \rightarrow Y$ is a map in $\sub$ and that we
  are given an algebra structure for $f$ over $\monadf{R}$ regarded as
  a pointed endofunctor (that is, we only require the unit law and not
  the multiplication law for algebras). Then we can produce Kan
  filling operators $f \uparrow$ and $f \downarrow$ for $f$ in a
  canonical way.
\end{lemma}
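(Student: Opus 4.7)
The plan is to unpack what an $\monadf{R}$-pointed-endofunctor algebra structure on $f$ amounts to, and then read off the Kan filling operators directly. Such a structure is a morphism $\alpha : K f \rightarrow X$ in $\sub$ satisfying $f \circ \alpha = \rho_f$ and $\alpha \circ \lambda_f = \mathrm{id}_X$ (the unit law); the multiplication law is not used. Given a $1$-open $(A, a)$-box $(u, y)$ over $f$, the key observation is that $\lambda_f \circ u$ is then a $1$-open $(A, a)$-box in $K f$ over $y$ with respect to $\rho_f$ (since $\rho_f \circ \lambda_f = f$), so the pair $(\lambda_f \circ u, y)$ is literally an element of $K_0^\uparrow f \subseteq K f$. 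I would therefore define
\[
f \uparrow (u, y) \;:=\; \alpha(\lambda_f \circ u,\, y),
\]
and dually $f \downarrow (u, y) := \alpha(\lambda_f \circ u, y)$ using $K_0^\downarrow f$ for $0$-open boxes.

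Next I would verify the filler conditions. That $f(f\uparrow(u,y)) = y$ is immediate from $f \circ \alpha = \rho_f$ and the definition of $\rho_f$ on $K_0^\uparrow f$. For the face equations, I would use that $\alpha$ is a morphism in $\sub$, so it preserves substitution, combined with the first clause of the definition of substitution on $K_n^\uparrow f$: for $(b, i) \in (A \times 2) \setminus \{(a, 1)\}$ we have $(\lambda_f \circ u, y)(b := i) = \lambda_f(u(b, i))$ inside $K f$, and applying $\alpha$ then using the unit law $\alpha \circ \lambda_f = \mathrm{id}_X$ gives $u(b, i)$, as required.

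For the uniformity conditions I would use that $\alpha$ is equivariant and preserves substitution. Equivariance of $\alpha$, together with the componentwise action on $K_0^\uparrow f$, immediately gives $f\uparrow(\pi(u, y)) = \pi(f\uparrow(u,y))$. For stability under a fresh substitution $c \# A$, $i \in 2$, the third (``otherwise'') clause of the substitution on $K_n^\uparrow f$ gives $(\lambda_f \circ u, y)(c := i) = (\lambda_f \circ u(c := i),\, y(c := i))$ in $K f$, and applying $\alpha$ (which preserves substitution) yields $f\uparrow((u, y)(c := i)) = (f\uparrow(u,y))(c := i)$. The construction for $0$-open boxes is symmetric.

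There is no real obstacle here: once one recognises that the raw $1$-open boxes over $f$ sit inside $K f$ via $\lambda_f$, the $\monadf{R}$-algebra map $\alpha$ simply selects their fillers, and all verifications reduce to the substitution preservation of $\alpha$ and the explicit clauses in the definition of substitution on $K_n^\uparrow f$ and $K_n^\downarrow f$. The only point that requires mild care is keeping track of which clause of the substitution definition applies in each uniformity check.
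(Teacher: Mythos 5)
Your proposal is correct and takes essentially the same approach as the paper: regard the open box $(u, y)$ as an element of $K f$ via the inclusion of $X$, apply the algebra map, and deduce the filler and uniformity conditions from the unit law, the triangle over $Y$, and the fact that the algebra map is a morphism in $\sub$ (preserves permutations and substitutions). The only cosmetic slip is that $(\lambda_f \circ u, y)$ lives in $K_1^\uparrow f$ rather than $K_0^\uparrow f$ (the latter is empty, since $\colim_{m < 0} K_m f$ is initial), which does not affect the argument.
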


\begin{proof}
  A pointed endofunctor algebra structure for $f$ is precisely a map
  $g$ satisfying the following commutative diagrams.
  \begin{gather}
    \begin{gathered}
      \xymatrix{ K f \ar[d]_{\rho_f}
        \ar[r]^g & X \ar[d]^f \\
         Y \ar@{=}[r] & Y
      }
    \end{gathered} \\
    \begin{gathered}
      \xymatrix{ X \ar@{=}[dr] \ar[r]^{\lambda_f} & Kf \ar[d]^g \\
        & X
        }
    \end{gathered}
  \end{gather}
  Let $(u, y)$ be an open box in $X$ over $y$ in $Y$. Note that we can
  regard $(u, y)$ as an element of $K f$. Define $f \uparrow (u, y)$
  to be $g((u,y))$. One can easily check that this is a filler for
  $(u,y)$ and that $f \uparrow$ respects the uniformity conditions by
  applying the above two diagrams and the fact that $g$ preserves
  permutations and substitutions (since it is a morphism in
  $\sub$). Similarly for $f \downarrow$.
  % TODO: FILL IN THE REMAINING DETAILS
\end{proof}

\begin{lemma}
  \label{lem:consalg}
  Suppose that we are given $f : X \rightarrow Y$ and uniform Kan
  filling operations $f \uparrow$ and $f \downarrow$ for $f$. We can
  produce an $\monadf{R}$-algebra structure on $f$ in a canonical way.
\end{lemma}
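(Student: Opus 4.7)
The plan is to define the structure map $g : K f \to X$ by induction on rank (Definition \ref{def:rank}). On the rank-zero part $X \subseteq K f$, set $g$ to be the identity. Given $(u, y) \in K_n^\uparrow f$ a $1$-open $(A,a)$-box, each $u(b, i)$ has strictly lower rank, so $g \circ u$ is inductively defined and assembles into a $1$-open $(A,a)$-box over $y$ in $X$ (the adjacency and freshness conditions transfer because $g$ is, inductively, an $\sub$-morphism over $Y$); set $g(u, y) := {f\uparrow}(g \circ u, y)$. For $\langle a\rangle(u, y) \in K_n^+ f$, set $g(\langle a\rangle(u, y)) := ({f\uparrow}(g \circ u, y))(a := 1)$, applying Lemma \ref{lem:nameabsext} using the freshness axiom $a \# x(a := 1)$ to see that this descends through name abstraction. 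Treat $K_n^\downarrow f$ and $K_n^- f$ analogously using ${f\downarrow}$.

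The first verifications are that $g$ is equivariant, preserves the substitution operation, and satisfies $f \circ g = \rho_f$. Equivariance follows by induction from part (a) of the uniformity conditions on ${f\uparrow}$ and ${f\downarrow}$. For substitution, one splits into the three cases used to define the substitution on $K_n f$ and $K_n^+ f$; in the case $(a', i) \in A \times 2 \setminus (a, 1)$ the result reduces to the inductive value $g(u(a', i))$ because a filler restricts to its open box, in the case $a' = a$, $i = 1$ the definition of $g$ on $K_n^+ f$ agrees by construction, and in the remaining case part (b) of the uniformity condition gives exactly $({f\uparrow}(g \circ u, y))(a' := i) = {f\uparrow}(g \circ u(a' := i), y(a' := i))$. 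The relation $f \circ g = \rho_f$ is immediate from the defining property of a filler, and the unit law $g \circ \lambda_f = \mathrm{id}_X$ holds by the base case.

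The main work is the multiplication law
\begin{equation}
  g \circ \pi_f \;=\; g \circ K(g, \mathrm{id}_Y) \;:\; K \rho_f \longrightarrow X.
\end{equation}
This is proved by induction on the rank of an element of $K \rho_f$. On the copy of $K f \subseteq K \rho_f$ both sides equal $g$, since $\pi_f$ is the identity there and $K(g, \mathrm{id}_Y)$ acts as $g$ followed by $\lambda_f$ on this copy, after which $g \circ \lambda_f = \mathrm{id}$ by the unit law already verified. For $(u, y) \in K^\uparrow \rho_f$ both sides unwind, using the definitions of $\pi_f$ and the functorial action of $K$, to ${f\uparrow}$ applied to the same inductively-determined open box over $y$; the $K^+$ case then follows by applying $(a := 1)$ and Lemma \ref{lem:nameabsext}, and the two down-components are symmetric.

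The hardest step to write out carefully is the substitution-preservation check together with the $K^+$ and $K^-$ steps of the multiplication law, because these are where one must simultaneously invoke the uniformity of the Kan filling operators, the freshness axiom for $01$-substitution, and Corollary \ref{cor:nameabsextexp} to confirm the expressions involving $\langle a\rangle$ are well-defined functions of the $\alpha$-equivalence class. Once those technical checks are in hand, the inductive arguments are uniform across the four components $K_n^\uparrow f$, $K_n^+ f$, $K_n^\downarrow f$, $K_n^- f$ and the $\monadf{R}$-algebra laws drop out immediately.
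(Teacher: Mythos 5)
Your proposal is correct and follows essentially the same route as the paper: define $g$ by induction on rank with $g(u,y) := f\uparrow(g \circ u, y)$, use the uniformity conditions to see $g$ is a morphism in $\sub$, and verify the algebra laws by induction on rank. The paper merely leaves implicit the details you spell out (the explicit treatment of $K^+ f$ via lemma \ref{lem:nameabsext} and the case analysis for substitution preservation and the multiplication law), and those details are handled correctly in your write-up.
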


\begin{proof}
  Define $g : K f \rightarrow X$ by induction. For $k$ a rank 0
  element of $K f$, take $g(x)$ to be $x$.  For $(u, y)$ in
  $K^\uparrow f$, note that we may assume by induction that $g \circ
  u$ is defined and that it is an open box over $y$ in $X$. Define
  $g((u,y))$ to be $f \uparrow (g \circ u, y)$. Note that this is a
  morphism in $\sub$ (ie preserves permutations and substitutions) by
  applying the uniformity conditions. We need to check that the
  following diagrams commute.
  \begin{gather}
    \begin{gathered}
      \xymatrix{ X \ar@{=}[dr] \ar[r]^{\lambda_f} & Kf \ar[d]^g \\
        & X
        }
    \end{gathered} \\
    \begin{gathered}
      \xymatrix{ K f \ar[d]_{\rho_f}
        \ar[r]^g & X \ar[d]^f \\
         Y \ar@{=}[r] & Y
      }
    \end{gathered} \\
    \begin{gathered}
        \xymatrix{ K \rho_f \ar[r]^{\pi_f} \ar[d]_{K (g, 1)} & K f
          \ar[d]^g \\
          K f \ar[r]_g & X
        }
      \end{gathered}
  \end{gather}
  The first follows easily from the definition of $g$, and the
  remaining two can be shown by induction on rank.
\end{proof}

\begin{theorem}
  For $f : X \rightarrow Y$, $\monadf{R}$-algebra structures on $f$
  are in precise correspondence to pairs $f \uparrow$ and $f
  \downarrow$ of Kan filling operators.
\end{theorem}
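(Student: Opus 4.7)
The plan is to combine Lemmas \ref{lem:conskanfill} and \ref{lem:consalg} into a bijection and verify that the two round-trips are the identity. Lemma \ref{lem:consalg} gives a map from pairs of uniform Kan filling operators to $\monadf{R}$-algebra structures, and Lemma \ref{lem:conskanfill} gives a map the other direction (in fact it only uses the unit law, i.e.\ pointed endofunctor algebra structure). So most of the work is the two round-trips.

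One direction is essentially immediate. Starting from filling operators $f{\uparrow}$, $f{\downarrow}$, the algebra $g$ built in Lemma \ref{lem:consalg} satisfies $g((u,y)) = f{\uparrow}(g \circ u, y)$ for $(u,y) \in K^\uparrow f$; on a rank-$1$ element $u$ takes values in $X \subseteq K f$, so $g \circ u = u$ and hence $g((u,y)) = f{\uparrow}(u,y)$. Lemma \ref{lem:conskanfill} then recovers the original operator from $g$, and the downward case is symmetric.

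The main obstacle is the other round-trip: starting from an $\monadf{R}$-algebra $g$, producing operators $f{\uparrow}_g$, $f{\downarrow}_g$ via Lemma \ref{lem:conskanfill}, and then applying Lemma \ref{lem:consalg} to get a map $g'$, I must show $g = g'$. I would do this by induction on rank. At rank $0$ both maps are the identity on $X$. For a positive-rank element $(u,y) \in K^\uparrow f$,
\begin{align*}
g'((u,y)) &= f{\uparrow}_g(g' \circ u, y) = f{\uparrow}_g(g \circ u, y) = g((g \circ u, y)),
\end{align*}
where the second equality is the inductive hypothesis applied to the strictly lower-rank entries of $u$, and the third unfolds the definition of $f{\uparrow}_g$. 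It remains to show $g((g \circ u, y)) = g((u,y))$, and this is exactly where the multiplication law $g \circ \pi_f = g \circ K(g, 1_Y)$ enters (and is the reason the full monad structure, not merely a pointed endofunctor algebra, is required for the bijection). Viewing $(u,y)$ as a rank-$1$ element of $K \rho_f$ (valid since $u$ takes values in $K f$), one has $\pi_f((u,y)) = (u,y) \in K f$ and $K(g, 1_Y)((u,y)) = (g \circ u, y)$, so the multiplication law yields precisely $g((u,y)) = g((g \circ u, y))$.

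The remaining components $K^+ f$, $K^\downarrow f$, $K^- f$ are treated identically, working with $\langle a\rangle(u,y)$ and the downward filling operator where appropriate, again using Lemma \ref{lem:nameabsext} to handle the name-abstraction cases. Together these two verifications establish the claimed correspondence.
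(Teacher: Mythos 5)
Your proposal is correct and follows essentially the same route as the paper, which simply observes that the constructions of Lemmas \ref{lem:conskanfill} and \ref{lem:consalg} are mutually inverse, with the harder round-trip holding only for (monad) $\monadf{R}$-algebra structures. Your verification supplies the details the paper leaves implicit, and in particular correctly locates where the multiplication law $g \circ \pi_f = g \circ K(g,1)$ is needed, applied by viewing $(u,y)$ as an element of $K\rho_f$.
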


\begin{proof}
  Observe that the construction in the proof of lemma \ref{lem:consalg} is
  a left inverse to the construction in the proof of lemma
  \ref{lem:conskanfill}, and is also a right inverse when restricted to
  (monad) $\monadf{R}$-algebra structures.
\end{proof}

\begin{theorem}
  \label{thm:fibdefs}
  Let $f : X \rightarrow Y$ be a morphism in $\sub$. Then the
  following are equivalent.
  \begin{enumerate}
  \item $f$ is an element of $\bar{\mathcal{R}}$
  \item $f$ admits a pointed endofunctor $\monadf{R}$-algebra
    structure
  \item $f$ is a Kan fibration
  \item $f$ admits a (monad) $\monadf{R}$-algebra structure
  \end{enumerate}
\end{theorem}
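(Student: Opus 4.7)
The plan is to close the cycle $(1) \Rightarrow (2) \Rightarrow (3) \Rightarrow (4) \Rightarrow (2) \Rightarrow (1)$ using the lemmas and general facts already assembled in this section. Almost all the content lies in the two preceding lemmas; the theorem is really a repackaging.

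First I would dispense with $(1) \Leftrightarrow (2)$ by appealing to the general theory of awfs's. As noted in the paragraph introducing the section, $\bar{\mathcal{R}}$ is the retract closure of the free $\monadf{R}$-maps, and by \cite[Lemma 2.8]{riehlams} this coincides exactly with the class of maps admitting a pointed endofunctor $\monadf{R}$-algebra structure. So $(1) \Leftrightarrow (2)$ holds with no further work needed here.

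Next I would handle $(2) \Rightarrow (3)$, which is exactly the statement of Lemma \ref{lem:conskanfill}: a pointed endofunctor algebra structure on $f$ yields Kan filling operators $f{\uparrow}$ and $f{\downarrow}$. Then $(3) \Rightarrow (4)$ is the statement of Lemma \ref{lem:consalg}, which upgrades a pair of Kan filling operators all the way to a full (monad) $\monadf{R}$-algebra structure on $f$, using induction on rank (as made available by Lemma \ref{lem:rank}). Finally, $(4) \Rightarrow (2)$ is immediate: every monad algebra satisfies in particular the unit law, which is exactly the pointed endofunctor algebra axiom, so forgetting the multiplicativity condition gives a pointed endofunctor algebra structure.

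There is no real obstacle: the only thing I would take care over is making sure the cycle is stated correctly and that the direction of each implication matches what the preceding lemmas deliver. One could also note that, once assembled, this theorem strengthens the previous theorem (which already gave $(3) \Leftrightarrow (4)$) by pinning down both of those notions to the class-theoretic notion of fibration that comes out of the awfs.
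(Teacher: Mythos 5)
Your proposal is correct and follows essentially the same route as the paper: the substantive implications $(2)\Rightarrow(3)$ and $(3)\Rightarrow(4)$ are delegated to Lemmas \ref{lem:conskanfill} and \ref{lem:consalg}, and the link to $\bar{\mathcal{R}}$ comes from the general awfs fact cited from Riehl's Lemma 2.8. The only (immaterial) difference is where the cycle closes: the paper uses the trivial implication $(4)\Rightarrow(1)$ (an $\monadf{R}$-algebra is in particular an $\monadf{R}$-map, hence in its retract closure) together with $(1)\Rightarrow(2)$, whereas you use $(4)\Rightarrow(2)$ plus the full biconditional $(1)\Leftrightarrow(2)$, both of which are covered by the same cited lemma.
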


\begin{proof}
  For $(1)\Rightarrow (2)$, this follows easily from the fact that
  $(\bar{\mathcal{L}}, \bar{\mathcal{R}})$ is a wfs and in particular
  that $\mathcal{L} \pitchfork \bar{\mathcal{R}}$. (See \cite[Lemma
  2.8]{riehlams})

  For $(2) \Rightarrow (3)$, apply lemma \ref{lem:conskanfill}.

  For $(3) \Rightarrow (4)$, apply lemma \ref{lem:consalg}.

  Finally note that $(4) \Rightarrow (1)$ is trivial.
\end{proof}

\begin{remark}
  The implication $(2) \Rightarrow (4)$ in theorem \ref{thm:fibdefs}
  is to be expected, since it holds in general for awfs's generated with
  Garner's version of the small object argument (see 
  \cite[Lemma 2.30]{riehlams}).
\end{remark}

\section{The Generating Left Maps}

In \cite{garnersmallobject}, Garner shows how to define an awfs from a
generating diagram of $\monadf{L}$-maps. The resulting awfs is
\emph{algebraically free} over the diagram.  Since the construction
here is constructive variant of Garner's argument the same should be
true here. In this section we define a diagram based on box inclusions
and check that indeed the awfs is algebraically free over the diagram.

\subsection{Definition of the Diagram}

We now define the diagram of generating $\monadf{L}$-maps. This is
based on open box inclusions for cubical sets (as described by Huber
in \cite[Remark 3.9]{huberthesis}) translated into 01-substitution
sets.

Let $\mathcal{J}$ be the category defined as follows. The objects of
$\mathcal{J}$ are 4-tuples $(i, A, a, B)$, where $i \in 2$, $A$ and
$B$ are finite subsets of $\names$ and $a \in A$. A morphism $(i, A,
a, B) \rightarrow (i', A', a', B')$ is a pair $(f, g)$ such that $i =
i'$, $f : A' \rightarrow A$ is a bijection with $f(a') = a$ and $g :
B' \rightarrow B$ is a morphism in the category of names and
substitutions, ie a function $B' \rightarrow B \amalg 2$ which is
``injective where defined.'' (Note that $f$ and $g$ are in the
opposite direction to $(f, g)$).

\begin{definition}
  \label{def:2}
  Given a function $f : A \rightarrow \names \amalg 2$, say $f$ is
  \emph{injective where defined} if the function $f|_{f^{-1}(\names)}$
  is an injection.
\end{definition}

\begin{definition}
  \label{def:3}
  Given $A$ a finite subset of $\names$, define the \emph{standard
    $A$-cube}, $\square_A$, to be the set of functions $A \rightarrow
  \names \amalg 2$ that are injective where defined. We make
  $\square_A$ a 01-substitution set by defining
  \begin{equation}
    (\pi . f)(a) := 
    \begin{cases}
      \pi(f(a)) & f(a) \in \names \\
      f(a) & f(a) \in 2
    \end{cases}
  \end{equation}
  and
  \begin{equation}
    \label{eq:8}
    (f (a' := i))(a) :=
    \begin{cases}
      f(a) & f(a) \in 2 \\
      f(a) & f(a) \in \names \text{ and } f(a) \neq a' \\
      i & f(a) = a'
    \end{cases}
  \end{equation}
\end{definition}

Note that this gives a functor from the opposite of the category of
names and substitutions to $\sub$. Given $f : B \rightarrow A$, define
$\square_f : \square_A \rightarrow \square_B$ by sending $g \in
\square_A$ to $g \circ f$. (This corresponds to the functor to cubical
sets given by the Yoneda lemma)

\begin{definition}
  \label{def:1}
  Given $A$ a finite subset of $\names$ and $a \in A$, define the
  \emph{standard 1-open $A,a$-box}, $\sqcup_{A,a}$ in $\sub$ to be the
  subset of $\square_A$ consisting of $f$ such that for some $(a', i)
  \in A \times 2 \setminus \{(a, 1)\}$, $f(a') = i$. Note that this is
  a subobject of $\square_A$ by inclusion, in $\sub$.

  The \emph{standard 0-open $A,a$-box}, $\sqcap_{A,a}$, is defined
  analogously.
\end{definition}

Note that the morphism $\square_f : \square_A \rightarrow \square_B$
given above restricts to a morphism $\sqcup_{A, a} \rightarrow
\sqcup_{B, b}$ if $f(b) = a$.

We define a functor $J : \mathcal{J} \rightarrow \sub^\btwo$ as
follows. On an object $(1, A, a, B)$, define $J(1, A, a, B)$ to be the
map from $\sqcup_{A, a} \ast \square_B$ to $\square_A \ast \square_B$
defined by taking the product of the inclusion $\sqcup_{A,a}
\hookrightarrow \square_A$ with the identity on $\square_B$.

Given a morphism $(f, g) : (1, A, a, B) \rightarrow (1, A', a', B')$,
define $J(f,g)$ componententwise by composing $f$ and $g$ with
elements of $\square_A$ and $\square_B$ respectively.

\subsection{Algebraic Freeness}

We will show that the awfs is algebraically free on $J$. This means we
need to find $\eta : \mathcal{J} \rightarrow \lmap$ over $\sub^\btwo$
such
% NOTE: GARNER SAYS THIS IS A MORPHISM OVER C (NOT C^2)
that the following is an isomorphism of categories (see
\cite[Definition 3.9]{garnersmallobject}).
\begin{equation}
  \label{eq:shouldbeiso}
  \rmap \stackrel{\operatorname{lift}}{\longrightarrow} \lmap^\pitchfork
    \stackrel{\eta^\pitchfork}{\longrightarrow} \mathcal{J}^\pitchfork
\end{equation}

We require that $\eta(1,A, a, B)$ is an $\monadf{L}$-coalgebra
structure on $J(1,A, a, B)$. We now fix $(1,A, a, B)$ and refer to
$J(1,A,a,B)$ as $\iota$. To find $\eta(1,A,a,B)$ we need to find $h :
\square_A \ast \square_B \rightarrow K \iota$ satisfying the following
commutative diagrams.
\begin{gather}
  \begin{gathered}
    \xymatrix{ \sqcup_{A,a} \ast \square_B \ar@{=}[r] \ar[d]_\iota &
      \sqcup_{A,a} \ast \square_B \ar[d]^{\lambda_\iota} \\
      \square_A \ast \square_B \ar[r]^h & K \iota }
  \end{gathered} \label{eq:coalg2morph} \\
  \begin{gathered}
    \xymatrix{ \square_A \ast \square_B \ar@{=}[dr] \ar[r]^h
      & K \iota \ar[d]^{\rho_\iota} \\
      & \square_A \ast \square_B }
  \end{gathered} \label{eq:coalgcounit} \\
  \begin{gathered}
    \xymatrix{ \square_A \ast \square_B \ar[r]^h \ar[d]_h & K \iota
      \ar[d]^{K(1, h)} \\
      K \iota \ar[r]^{\sigma_\iota} & K \lambda_\iota }
  \end{gathered} \label{eq:coalgcomult}
\end{gather}

If $(f,g) \in \square_A \ast \square_B$ is already an element of
$\sqcup_{A,a} \ast \square_B$, then define $h(f,g)$ to be $(f,
g)$. Note that this precisely ensures that \eqref{eq:coalg2morph}
commutes. 

If $f$ is defined everywhere on $A$, then let $u$ be the 1-open $f(A),
f(a)$-box defined for $a'' \in f(A)$ by
\begin{equation}
  \label{eq:hdefkanfill}
  u(a'', i) := f \, (a'' := i)
\end{equation}
We then define
\begin{equation}
  \label{eq:13}
  h(f, g) := (u, (f, g))
\end{equation}

If $f$ is defined everywhere on $A \setminus a$ and $f(a) = 1$, then
let $b$ be a fresh variable, define $f'$ as follows
\begin{equation}
  \label{eq:11}
  f'(a') :=
  \begin{cases}
    f(a') & a' \in A \setminus a \\
    b & a' = a
  \end{cases}
\end{equation}
Then define $u'$ as for $u$ in \eqref{eq:hdefkanfill}, but with $f'$
in place of $f$, and we can now define $h(f, g) \in K^+ \iota$ to be
\begin{equation}
  \label{eq:12}
  h(f, g) := \langle b \rangle (u', (f', g))
\end{equation}

This completes the definition of $\eta(1,A,a,B) = h$. The
commutativity of \eqref{eq:coalgcounit} is clear by definition and
\eqref{eq:coalgcomult} can easily be checked. The case for $0$-open
boxes is similar. We now show that \eqref{eq:shouldbeiso} is an
isomorphism by exhibiting an inverse.

The elements of $\mathcal{J}^\pitchfork$ are of the form $(g, \phi)$
where $g : X \rightarrow Y$ is a morphism in $\sub$ and $\phi$ is
lifting data for $g$ against $J$. We will use $\phi$ to define Kan
filling operators on $g$ and apply lemma \ref{lem:conskanfill} to get
an $\monadf{R}$-algebra structure on $g$.

Let $(u, y)$ be an open box in $X$ over $y$ in $Y$. Let $u$ be a
1-open $A, a$-box, and let $C$ be a finite support for $u$. Let $B :=
C \setminus A$, and note that $B$ is a finite set. For any $(a', i)
\in A \times 2 \setminus (1,a)$, we can define $\overline{(a',i)} \in
\sqcup_{A,a}$ by 
\begin{equation}
  \overline{(a',i)}(a'') := 
  \begin{cases}
    i & a'' = a' \\
    a'' & a'' \neq a' 
  \end{cases}
\end{equation}
Now note that there is a unique morphism $\tilde{u} : \sqcup_{A,a}
\rightarrow X$ such that for each $(a',i) \in A \times 2 \setminus (a,
1)$, $\tilde{u}(\overline{(a',i)}) = u(a',i)$. Also note that there is
a unique morphism $\tilde{y} : \square_A \ast \square_B \rightarrow Y$
such that $\tilde{y}(1_A, 1_B) = y$. (Both of these results can be
viewed as translating the Yoneda lemma from cubical sets to
01-substitution sets). These maps together make a commutative square:
\begin{gather}
  \begin{gathered}
    \xymatrix{ \sqcup_{A,a} \ast \square_B \ar[r]^{\tilde{u}}
      \ar@{^(->}[d] & X \ar[d]^g \\
      \square_A \ast \square_B \ar[r]^{\tilde{y}} & Y }
  \end{gathered}
\end{gather}
Then, applying the lifting data $\phi$ yields a diagonal filler $j :
\square_A \ast \square_B \rightarrow X$. One can check that $j(1_A,
1_B)$ is a filler for $(u,y)$.

To show that this gives a Kan filling operation, we first need to
check that it is well defined. So let $C'$ be another finite support
for $u$. Since the intersection of two finite supports is also a
finite support, we may assume without loss of generality that $C'
\subseteq C$. Hence also $B' \subseteq B$ and the inclusion $i : B'
\hookrightarrow B$ induces a morphism $(1, A, a, B) \rightarrow (1, A,
a, B')$ in $\mathcal{J}$. Applying the coherence condition for $\phi$
to $i$, then shows that the fillers we get using $B$ and using $B'$
are equal.

Similar arguments show that the filling operation we get satisfies the
uniformity conditions.

Exactly the same argument allows us to construct a filling operator
for 0-open boxes. Applying lemma \ref{lem:conskanfill} then gives us
an $\monadf{R}$-algebra structure on $g$.

Finally, one can check that this is in fact an inverse to the map
\eqref{eq:shouldbeiso}.

\begin{remark}
  We can use this characterisation to verify that for any map $f$ in
  the category of cubical sets, the fibration structures on $f$
  correspond precisely to the fibration structures on the image of $f$
  in $\sub$ under Pitts' equivalence in \cite{pittsnompcs}. This is
  because Huber gave in \cite[Remark 3.9]{huberthesis} a
  characterisation of fibrations that can be easily seen as showing
  the fibrations are cofibrantly generated. Since it can be easily
  checked that the generating diagram in this section is (naturally
  isomorphic to) the image of the generating diagram of left maps in
  \cite[Remark 3.9]{huberthesis}, we deduce that the resulting
  fibration structures are also the same (up to isomorphism).
\end{remark}

\section{Path Objects}

As shown by Awodey and Warren in \cite{awodeywarren}, identity
types in type theory can be implemented using path objects, which are
defined as follows.

\begin{definition}
  Let $(\mathcal{L}, \mathcal{R})$ be a wfs and let $f : X
  \rightarrow Y$ be a map in $\mathcal{R}$. A \emph{path object} on
  $f$ is a factorisation of the the diagonal map
  $\Delta : X \rightarrow X \times_Y X$ as a map in $\mathcal{L}$
  followed by a map in $\mathcal{R}$.
\end{definition}

Note that we can trivially generate path objects using the awfs
structure itself. For any fibration $f : X \rightarrow Y$, if $\Delta
: X \rightarrow X \times_Y X$ is the diagonal map, then $K \Delta$ is
a path object. However, in order to implement identity types it is
necessary for the path objects to be stable under pullback (see the
statement of \cite[Theorem 3.1]{awodeywarren}). 

This is not the case
for path objects generated using the awfs, as we prove below. The
basic idea here is that given a continuous function $f : X \rightarrow
Y$, for each point $x \in X$ and each path $p$ in $Y$ with $p(0) =
f(x)$, we freely added a path $\tilde{p}$ in $X$ over $p$ with
$\tilde{p}(0) = x$. However, our construction also adds the other
endpoint of $\tilde{p}$, $\tilde{p}(1)$. In our definition of the
awfs, this can be seen explicitly as elements of the $K^+ f$ component
(and similarly the $K^-$ component). The added endpoint $\tilde{p}(1)$
contains ``data'' about the whole path $p$, but lies entirely in the
fibre of $p(1)$. Hence the pullback along the map $1 \rightarrow Y$
given by $p(1)$ does not preserve the awfs.

\begin{theorem}
  \label{thm:1}
  There is a fibration $f : X \rightarrow Y$ and a map $g : Z
  \rightarrow Y$ such that, writing $g^\ast(f)$ for the pullback of
  $f$ along $g$, there is no map $K \Delta_{g^\ast(f)} \rightarrow K
  \Delta_f$ making the following diagram a pullback.
  \begin{equation}
    \label{eq:18}
    \begin{gathered}
      \xymatrix{
        K \Delta_{g^\ast(f)} \ar[r] \ar[d]_{\rho_{\Delta_{g^\ast(f)}}} &
        K \Delta_f \ar[d]_{\rho_{\Delta_f}} \\
        g^\ast(X) \times_{Z} g^\ast(X) \ar[r] & X \times_Y X }
    \end{gathered}
  \end{equation}
\end{theorem}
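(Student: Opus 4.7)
I would build an explicit counterexample from the nerve construction (Proposition \ref{prop:nervefibrant}). Set $\names = \nat$, take $Y = N(\rint)$, and let $X = Y$ with $f = \mathrm{id}_Y$ (trivially a fibration), so that $X \times_Y X \cong X$ and $\Delta_f$ is essentially the identity on $X$. Take $Z = 1$ and let $g : 1 \to Y$ pick out the constant path $c_1 \in N(\rint)$; then $g^\ast(X) \cong 1$, $g^\ast(f) \cong \mathrm{id}_1$, and $\Delta_{g^\ast(f)} \cong \mathrm{id}_1$. Under these identifications, the question reduces to comparing $K(\mathrm{id}_X)$ with $K(\mathrm{id}_1)$ via $\rho$ above the singleton $\{(c_1, c_1)\}$.

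For each uniformly continuous $q : \rint \to \rint$ with $q(0) = 0$ and $q(1) = 1$, Lemma \ref{lem:ctsfunctionsnerve} produces an element $p_q \in X$ with $\{a\}$ a support and $p_q(a := i) = c_i$ for $i \in 2$. Let $u_q$ be the $1$-open $(\{a\}, a)$-box in $X$ defined by $u_q(a, 0) := c_0$ (forced by the condition $\Delta_f(u_q(a, 0)) = (p_q, p_q)(a := 0)$). Then $\xi_q := \langle a \rangle (u_q, (p_q, p_q)) \in K_1^+ \Delta_f$ has $\rho_{\Delta_f}(\xi_q) = (c_1, c_1)$. Distinct $q$'s yield elements $p_q$ in distinct $\perma$-orbits, and since the $y$-component of a $K^+$-element is recoverable from its $\alpha$-equivalence class, the $\xi_q$ are pairwise distinct. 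Hence $\rho_{\Delta_f}^{-1}((c_1, c_1))$ is uncountable.

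A rank induction (Definition \ref{def:rank}) shows conversely that $K(\mathrm{id}_1)$ has countable underlying set: $K_0(\mathrm{id}_1) \cong 1$, and if $K_m(\mathrm{id}_1)$ is countable for every $m < n$, then the components $K_n^\uparrow, K_n^+, K_n^\downarrow, K_n^-$ are each indexed by finite combinatorial data over $\colim_{m < n} K_m(\mathrm{id}_1)$ together with the countable set $\names$, so they too are countable. In particular $\rho_{\Delta_{g^\ast(f)}}^{-1}(\ast)$ is countable. If some $\phi : K \Delta_{g^\ast(f)} \to K \Delta_f$ made the square a pullback, then, since pullbacks in $\sub$ are computed on underlying sets, the universal property would force $\phi$ to restrict to a bijection between $\rho_{\Delta_{g^\ast(f)}}^{-1}(\ast)$ and $\rho_{\Delta_f}^{-1}((c_1, c_1))$, contradicting the cardinality mismatch. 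The main obstacle is the countability induction: one must track $\alpha$-equivalence classes and substitution-closures carefully at every stage of $K$, and one must verify rigorously that distinct $q$ produce inequivalent $\xi_q$ modulo $\alpha$-equivalence (which reduces to recovering the $y$-component of a $K^+$-element from its $\alpha$-class via the definition of $K^+$ as a subobject of $[\names] K^\uparrow$).
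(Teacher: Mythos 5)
Your proposal is essentially the paper's own proof: the paper takes $X=Y=N(\rint)$, $f=\mathrm{id}$, $Z=1$ and $g$ a point, exhibits an uncountable family of elements $\langle a\rangle(u,h)$ of the $K^+$ component lying in the fibre over that point (indexed by uniformly continuous maps $\rint\to\rint$ with prescribed endpoints, distinguished via $\langle a\rangle x=\langle a\rangle x'\Rightarrow x=x'$), and contrasts this with the countability of $K 1_1$, exactly as you do. The only differences are cosmetic (endpoint $1$ versus $0$, phrasing the conclusion via fibre bijections rather than non-isomorphism with $1\times_X K 1_X$) plus the paper's parenthetical remark on how to phrase the cardinality argument constructively (decidability of the subset, countable choice, Cantor diagonalisation), which your write-up leaves implicit.
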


\begin{proof}
  Let $X = Y = N(\rint)$ (in fact any non trivial complete metric
  space will do), let $f$ be the identity on $N(\rint)$, and let $Z :=
  1$ and $g := \lambda x.0$.

  In particular, we have $X \times_Y X = X \times_X X \cong X$ and
  $g^\ast(X) \times_Z g^\ast(X) \cong 1$, so by functoriality of $K$,
  it suffices to show that $K {1_1}$ (by $1_1$ we mean the identity on
  the terminal object) is not isomorphic to $1 \times_X K 1_X$.

  $1 \times_X K 1_X$ contains an uncountable subset $U$, consisting of
  elements of the form $(\ast, \langle a \rangle (u, h))$ where $\ast$
  is the only element of $1$, $a \in \names$, $u$ is a $1$-open
  $(\{a\}, a)$-box and $h$ is any function $\rint \rightarrow \rint$
  such that $h(0) = u(a, 0)$ and $h(1) = 0$, which we can view as an
  element of $N(\rint)$ via lemma \ref{lem:ctsfunctionsnerve}. To show
  $(\ast, \langle a \rangle(u, h))$ belongs to $1 \times_X K 1_X$, we
  need that $\rho_f(\langle a \rangle (u, h)) = 0$, but this is the
  case because $\rho_f(\langle a \rangle (u, h)) = h(a := 1) = h(1) =
  0$. Also note that in general, for any nominal sets, if $\langle a
  \rangle x = \langle a \rangle x'$ then $x = x'$.

  However $K 1_1$ is countable, since it is a countable union of
  countable sets.

  (For this argument to work constructively, note that $U$ is an
  inhabited decidable subset of $1 \times_X K 1_X$ and such subsets of
  countable sets are countable. Furthermore, assuming countable
  choice, we can show a countable union of countable sets is countable
  and use Cantor's diagonal argument to show $U$ is uncountable.)
\end{proof}

\subsection{Name Abstraction}
The cubical sets used in \cite{bchcubicalsets} to implement identity
types correspond in 01-substitution sets to name abstraction.

\begin{definition}
  Given $f : X \rightarrow Y$, define the \emph{name abstraction over
    $f$}, $[\names]_f X$ as follows:
\begin{equation}
  \label{eq:6}
  [\names]_f X := \{ \langle a \rangle x \in [\names] X \;|\; a \#
  f(x) \}
\end{equation}
\end{definition}

One can adapt the proof in \cite{bchcubicalsets} that identity types
are Kan fibrations to show that the projection map $[\names]_fX
\rightarrow X \times_Y X$ is an $\monadf{R}$-map. 
%[TODO: CHECK THIS]

In order to show that $[\names]_fX$ is a path object on $X$ over $f$,
it remains only to show that the reflexivity map $r : X \rightarrow
[\names]_fX$ has the left lifting property with respect to every
$\monadf{R}$-map. ($r(x)$ is defined to be Fresh $a$ in $\langle a
\rangle x$.)

We will see later (corollary \ref{cor:homotopyispathobj}) that
assuming classical logic this is a path object. However, we show now
that this cannot be done constructively. In particular, we will show
that one cannot show constructively that $r : X \rightarrow [\names]_f
X$ is always a left map.

\begin{lemma}
  \label{lem:cofibimdec}
  Suppose that $i : U \rightarrow V$ has the left lifting property
  with respect to every $\monadf{R}$-map. Then $i$ has decidable
  image. (That is, every element of $V$ either lies in the image of
  $i$ or does not lie in the image of $i$).
\end{lemma}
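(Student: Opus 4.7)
The plan is to exploit the fact, established earlier, that every element of $K i$ has a well-defined rank in $\nat$, together with the observation that the rank-$0$ elements of $K i$ are exactly the elements of $U$ sitting inside $K_0 i$.

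First I would apply the functorial factorisation to $i$ itself, producing $\lambda_i : U \rightarrow K i$ and $\rho_i : K i \rightarrow V$ with $\rho_i \circ \lambda_i = i$. By Theorem \ref{thm:fibdefs} (or directly from the monad structure), $\rho_i$ admits an $\monadf{R}$-algebra structure and so lies in $\mathcal{R}$. Consequently the hypothesis on $i$ yields a diagonal filler $j : V \rightarrow K i$ in the square
\begin{equation*}
\begin{gathered}
\xymatrix{ U \ar[r]^{\lambda_i} \ar[d]_i & K i \ar[d]^{\rho_i} \\
V \ar@{=}[r] \ar[ur]^j & V }
\end{gathered}
\end{equation*}
satisfying $j \circ i = \lambda_i$ and $\rho_i \circ j = 1_V$.

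Next, for each $v \in V$, consider the rank $n(v) \in \nat$ of $j(v) \in K i$, as guaranteed by Lemma \ref{lem:rank}. Since $\nat$ has decidable equality, the predicate ``$n(v) = 0$'' is decidable. I would then show: $v$ lies in the image of $i$ if and only if $n(v) = 0$. For the forward direction, if $v = i(u)$ then $j(v) = j(i(u)) = \lambda_i(u)$, which is the image of $u \in U$ under the canonical inclusion into $K_0 i$, hence has rank $0$. For the converse, if $n(v) = 0$ then by Lemma \ref{lem:rank} the unique rank-$0$ representative $x'$ lies in the $U$-component of $K_0 i$, so $j(v) = \lambda_i(u')$ for some $u' \in U$, and therefore $v = \rho_i(j(v)) = \rho_i(\lambda_i(u')) = i(u')$.

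No step here is really an obstacle: everything reduces to the fact that rank is genuinely a natural number (not, for instance, an element of some non-decidably-ordered type) and to the description of rank-$0$ elements. The only point to double-check is that the rank-$0$ elements of $K i$ really are the image of $U$ and nothing else, which is immediate from the coproduct decomposition $K_0 i = U \amalg K_0^\uparrow i \amalg K_0^+ i \amalg K_0^\downarrow i \amalg K_0^- i$ together with the inductive case analysis in the proof of Lemma \ref{lem:rank} showing that non-$U$ summands produce strictly positive rank.
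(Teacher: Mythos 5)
Your proposal is correct and follows essentially the same route as the paper: factor $i$ as $\rho_i \circ \lambda_i$, lift against the $\monadf{R}$-map $\rho_i$ to get $j$ with $j \circ i = \lambda_i$ and $\rho_i \circ j = 1_V$, and use the well-definedness of rank (Lemma \ref{lem:rank}) to decide whether $j(v)$ lies in the image of $\lambda_i$, i.e.\ has rank $0$. The only difference is presentational: the paper first states that $\lambda_i$ has decidable image and then reduces membership in the image of $i$ to membership in the image of $\lambda_i$, while you phrase the same reduction directly in terms of the rank of $j(v)$.
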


\begin{proof}
  First, note that we can show that the map
  $\lambda_i : U \rightarrow K i$ has decidable image. This is because
  rank is well defined by lemma \ref{lem:rank}, but the image of
  $\lambda_i$ is precisely the subset of $K i$ of rank $0$, and every
  natural number is either equal to $0$ or greater than $0$.

  Now by assumption, $i : U \rightarrow V$ has the left
  lifting property with respect to the $\monadf{R}$-map $\rho_r$,
  giving us a diagonal filler, $j$, in the following diagram:
  \begin{equation}
    \label{eq:4}
    \begin{gathered}
      \xymatrix{ U \ar[r]^{\lambda_i} \ar[d]_i & K i \ar[d]^{\rho_i} \\
        V \ar@{=}[r] \ar@{.>}[ur]|j & V
      }
    \end{gathered}
  \end{equation}

  However, note that it easily follows from this diagram that any $v
  \in V$ lies in the image of $i$ if and only if $j(v)$ lies in the
  image of $\lambda_i$. Since we have checked that $\lambda_i$ has
  decidable image, it follows that $i$ must also have decidable
  image.
\end{proof}

\begin{theorem}
  \label{thm:nonarlm}
  It cannot be proved constructively that for every fibrant
  01-substitution set, $X$, the reflexivity morphism $r : X
  \rightarrow [\names]X$ has the left lifting property with respect to
  every $\monadf{R}$-map.
\end{theorem}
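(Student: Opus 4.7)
The plan is to apply lemma \ref{lem:cofibimdec}: if $r : X \rightarrow [\names]X$ has the left lifting property with respect to every $\monadf{R}$-map, then $r$ must have decidable image. So it suffices to exhibit a single fibrant 01-substitution set $X$ for which decidability of the image of $r$ would imply a known non-constructive principle.

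I would take $X := N(\rint)$, which is fibrant by proposition \ref{prop:nervefibrant}. An element $\langle a \rangle F$ of $[\names] X$ lies in the image of $r$ precisely when $a \,\#\, F$. Combined with lemma \ref{lem:ctsfunctionsnerve}, which identifies elements of $N(\rint)$ supported by $\{a\}$ with uniformly continuous functions $\rint \rightarrow \rint$ and identifies those supported by $\emptyset$ with the constant ones, decidability of the image of $r$ would in particular let us decide, for any uniformly continuous $h : \rint \rightarrow \rint$, whether $h$ is constant.

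To turn this into a contradiction with constructivity, I would, for each binary sequence $\alpha : \nat \rightarrow 2$, build a uniformly continuous $f_\alpha : \rint \rightarrow \rint$ as a uniformly convergent sum $f_\alpha(x) = \sum_{n} \alpha(n)\, b_n(x)$, where each $b_n$ is a tent-shaped bump of height $2^{-n}$ supported on a small interval around a point $x_n \in \rint$, with the $x_n$ chosen disjoint. Then $f_\alpha$ is constantly zero if and only if $\alpha$ is identically zero, so a decision procedure for constancy yields a decision procedure for ``$\forall n.\, \alpha(n) = 0$ or $\neg \forall n.\, \alpha(n) = 0$'', i.e.\ the weak limited principle of omniscience, which is not provable in $\czf$. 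Viewing $f_\alpha$ as an element of $N(\rint)$ with $\{a\}$ as support, the element $\langle a \rangle f_\alpha$ of $[\names]X$ is in the image of $r$ iff $\alpha$ is all-zero, so the reduction is direct.

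The main obstacle I expect is checking the analytic details: ensuring that $f_\alpha$ is genuinely uniformly continuous even when infinitely many $\alpha(n)$ are $1$ (handled by uniform convergence of the series), that non-constancy really forces some $\alpha(n)$ to be $1$ rather than merely $\neg \neg$-forces it (handled by the disjointness of the bumps together with lemma \ref{lem:ctsfunctionsnerve}), and that the resulting element of $N(\rint)$ carries the claimed singleton support so that the correspondence with constant maps can be invoked cleanly. This is precisely the failure of decidability of the freshness relation promised in the discussion preceding the theorem.
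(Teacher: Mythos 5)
Your proposal is correct and follows essentially the same route as the paper: reduce via lemma \ref{lem:cofibimdec} to decidability of the image of $r$, take $X = N(\rint)$ (fibrant by proposition \ref{prop:nervefibrant}), use lemma \ref{lem:ctsfunctionsnerve} to translate membership in the image of $r$ into constancy of a uniformly continuous function, and encode a binary sequence so that deciding constancy yields WLPO. The only difference is the analytic witness --- the paper uses a single function $\bar{f}$ (classically $2^{-m_0}x$ for $m_0$ least with $f(m_0)=1$) defined via a Cauchy sequence, while you use a uniformly convergent sum of disjointly supported bumps --- and both encodings work; note also that your worry about extracting an explicit $n$ with $\alpha(n)=1$ is unnecessary, since WLPO only needs the negated universal disjunct.
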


\begin{proof}
  We will show this by assuming $r$ does have this property and
  deriving the weak limited principle of omniscience (WLPO), that is,
  for any $f : \nat \rightarrow 2$ the statement ``$f(n) = 0$ for all
  $n$'' is either true or false. (In particular this implies the
  existence of noncomputable functions, and so is not provable
  constructively, even if we assume dependent choice, eg by
  considering the realizability model in \cite{rathjen06})

  Let $f : \mathbb{N} \rightarrow 2$. Define $X := N(\rint)$, the
  nerve of $\rint$. Since $\rint$ is complete, we have that $N(\rint)$
  is fibrant by proposition \ref{prop:nervefibrant}. 

  We construct a continuous function $\bar{f} : \rint \rightarrow
  \rint$. Given $x \in \rint$ we will construct a Cauchy sequence
  $(y_n)_{n < \omega}$. Let $n < \omega$. If $f(m) = 0$ for all $m <
  n$, then let $y_n := 0$. If there is $m < n$ such that $f(m) = 1$,
  then there is a least $m_0$ such that $f(m_0) = 1$. Let $y_n :=
  2^{-m_0} q$, where $q$ is chosen such
  that $|q - x| < 2^{-n}$. Note that in classical logic $\bar{f}$
  could have been defined
  \begin{equation}
    \label{eq:3}
    \bar{f}(x) =
    \begin{cases}
      0 & f(n) = 0 \text{ for all } n \\
      2^{-n} x & \text{there is } n
      \text{ least such that } f(n) = 1
    \end{cases}
  \end{equation}

  Now note that we can view $\bar{f}$ as an element of $X$ dependent
  on (at most) a single name, say, $a$ by lemma
  \ref{lem:ctsfunctionsnerve}. So we have that $\langle a \rangle
  \bar{f} \in [\names]X$.

  We then have that $\langle a \rangle \bar{f}$ lies in the image of
  $r$ if and only if $a$ is fresh for $\bar{f}$, which is the case
  precisely when $\bar{f}$ is constant with respect to $a$, which in
  turn happens precisely when $f$ is constantly $0$. But we showed in
  lemma \ref{lem:cofibimdec} that the image of $r$ must be
  decidable. Therefore, WLPO follows as required.
\end{proof}

\subsection{Labelled Name Abstraction}

In this section we give a new construction of path object that is
valid constructively and preserved by pullback. The basic idea is to
add ``labels'' to the ``name abstraction'' identity types. Any side of
a cube that has been ``labelled'' in guaranteed to be degenerate
(although there may be additional degenerate sides that are not
labelled). In order to make the notation and proofs slightly easier we
use some notions from the construction of the awfs. In particular, we
have already checked that $K \Delta$ is a 01-substitution set, so we
know, for instance that ``commutativity of substitution'' holds, ie $z
(a := i) (a' := i') = z (a' := i') (a := i)$. A full proof that these
objects can be used to model identity types will be left for another
paper, using ideas from algebraic model structures. For now we simply
give a definition and verify that they are path objects and stable
under pullback.

\begin{definition}
  \label{def:4}
  Let $f : X \rightarrow Y$ be a fibration and let $a \in
  \names$. Define the subset $P^0_Y X$ of $K \Delta$ of
  \emph{pre-normal forms in direction $a$} inductively as follows.
  \begin{enumerate}
  \item If $x \in X$ and $a \# x$ then $a$ is a pre-normal form in
    direction $a$.
  \item If $u$ is a $1$-open $A,a$-box over $(x_1, x_2)$, $a \# x_1$,
    $u(a,0) = x_1$ and for every $a', i \in (A \setminus a) \times 2$,
    $u(a',i)$ is a pre-normal form in direction $a$, then $(u, (x_1,
    x_2))$ is a pre-normal form in direction $a$.
  \end{enumerate}
  Note that $P^0_Y X$ is closed under permutations, and every
  substitution of the form $(a' := i)$ where $a' \neq a$.

  An element of $K \Delta$ is a \emph{normal form} if it is equal to
  $z (a := 1)$ where $z$ is some pre-normal form in direction $a$.

  We refer to the set of normal forms as $P_Y X$. Note that it is
  closed under permutations and substitutions, and so we can view it
  as a 01-substitution set in the natural way.
\end{definition}

\begin{lemma}
  \label{lem:bindinj}
  Let $z, z'$ be pre-normal forms in direction $a$. If $z (a := 1) =
  z' (a := 1)$ then $z = z'$.
\end{lemma}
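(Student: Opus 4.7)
The plan is to perform a one-step case analysis on which clause of Definition~\ref{def:4} produces each of $z$ and $z'$, exploiting the fact that the operation $(-)(a := 1)$ either leaves a pre-normal form fixed (clause (1)) or binds the whole thing with $\langle a \rangle$ (clause (2)), so no information is lost.

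First I would compute $z(a := 1)$ explicitly in each case. If $z = x$ comes from clause (1), then $a \,\#\, x$, so the second axiom for 01-substitution sets gives $z(a := 1) = x \in X$. If $z = (u,(x_1,x_2))$ comes from clause (2), then $z$ lies in the $K^\uparrow \Delta$-summand and the definition of substitution on $K^\uparrow\Delta$ from Section~\ref{sec:konobjs} gives $z(a := 1) = \langle a \rangle (u,(x_1,x_2)) \in K^+ \Delta$. The same analysis applies to $z'$.

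Next I would case-split on the four combinations. In the two ``mixed'' cases the two images land in different summands of the coproduct $X \amalg K^\uparrow\Delta \amalg K^+\Delta \amalg K^\downarrow\Delta \amalg K^-\Delta$ defining $K\Delta$, so the hypothesis $z(a := 1) = z'(a := 1)$ is impossible and the conclusion holds vacuously. In the (1)-(1) case the hypothesis reads $z = z'$ directly. In the (2)-(2) case it reads $\langle a \rangle (u,(x_1,x_2)) = \langle a \rangle (u',(x_1',x_2'))$, and since the two name abstractions share the same bound name $a$, unfolding $\alpha$-equivalence yields some $a''$ fresh for $a$ and both pairs with $(a''\;a) \cdot (u,(x_1,x_2)) = (a''\;a) \cdot (u',(x_1',x_2'))$; applying the involution $(a''\;a)$ once more gives $(u,(x_1,x_2)) = (u',(x_1',x_2'))$, i.e.\ $z = z'$. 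This last fact about $\langle a \rangle$ being injective when the bound names literally agree is already used in the proof of Theorem~\ref{thm:1}.

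There is no real obstacle: no induction on the recursive structure of the pre-normal forms is required, because the outer abstraction $\langle a \rangle$ binds the whole tuple $(u,(x_1,x_2))$ in a single step and the inductive hypotheses on the entries $u(a',i)$ play no role in recovering $z$ from $z(a := 1)$. The only thing to verify carefully is the disjointness of the five summands inside $K\Delta$, which is inherited from the construction as a filtered colimit of disjoint coproducts along the injective structure maps $\lambda_n$.
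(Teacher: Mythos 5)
Your proof is correct and takes essentially the same route as the paper: compute $z(a := 1)$ in each of the two clauses of the definition, rule out the mixed cases because the images land in different summands of $K\Delta$, and conclude in the box case from injectivity of $\langle a \rangle(-)$ at a fixed bound name. The paper's proof likewise uses no induction; your unfolding of $\alpha$-equivalence merely makes explicit the injectivity fact the paper states without proof.
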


\begin{proof}
  Note that if $z$ is an element of $X$ with $a \# z$, then $z (a :=
  1) = z$. If $z$ is an open box in direction $a$, then $z (a := 1)$
  is an element of $K^+ \Delta$.

  Therefore, if $z (a := 1) = z' (a := 1)$, then either $z$ and $z'$
  are both elements of $X$, or they are both elements of $K^\uparrow
  \Delta$.

  In the former case, we have $z (a := 1) = z$ and $z' (a := 1) = z'$
  (since $a \# z$ and $a \# z'$) and so $z = z'$ as required.

  In the latter case, we have that $z$ and $z'$ are of the form $(u,
  (x_1, x_2))$ and $(u', (x_1', x_2'))$ respectively. Then $z (a :=
  1)$ and $z' (a := 1)$ are the elements of $K^+ \Delta$, $\langle a
  \rangle (u, (x_1, x_2))$ and $\langle a \rangle (u', (x_1',
  x_2'))$. Since $\langle a \rangle (u, (x_1, x_2)) = \langle a
  \rangle (u', (x_1', x_2'))$, we have $(u, (x_1, x_2)) = (u', (x_1',
  x_2'))$ as required.
\end{proof}

\begin{corollary}
  \label{cor:pnfunique}
  For every normal form $w$ and every $a \in \names$ fresh, there is a
  unique pre-normal form $z$ in direction $a$ such that $z (a := 1) =
  w$.
\end{corollary}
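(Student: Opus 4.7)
The plan is to split the corollary into existence and uniqueness, get uniqueness immediately from lemma \ref{lem:bindinj}, and prove existence by transporting the given witness along a suitable transposition. Unfolding definition \ref{def:4}, any normal form $w$ comes with some name $a'$ and some pre-normal form $z'$ in direction $a'$ such that $z'(a' := 1) = w$. Axiom (1) of 01-substitution sets then gives $a' \,\#\, w$ for free. If the fresh name $a$ supplied by the hypothesis happens to equal $a'$, I take $z := z'$ and am done. Otherwise I want $z$ to be essentially $z'$ with the role of $a'$ renamed to $a$.

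The key auxiliary observation I would record first is a short equivariance lemma for $P^0_Y X$: for every $\pi \in \perma$, if $z$ is a pre-normal form in direction $a$, then $\pi \cdot z$ is a pre-normal form in direction $\pi(a)$. This is a routine induction on the two clauses of definition \ref{def:4}. In the base case, $a \,\#\, x$ implies $\pi(a) \,\#\, \pi \cdot x$. In the inductive case, a $1$-open $A, a$-box $u$ is carried by $\pi$ to a $1$-open $\pi(A), \pi(a)$-box $\pi \cdot u$; the condition $\pi(a) \,\#\, \pi \cdot x_1$ follows from $a \,\#\, x_1$; the identity $(\pi \cdot u)(\pi(a), 0) = \pi \cdot u(a,0) = \pi \cdot x_1$ is immediate; and for each $a'' \in \pi(A) \setminus \pi(a)$ and $i \in 2$ we may write $a'' = \pi(a')$ with $a' \in A \setminus a$, so $(\pi \cdot u)(a'', i) = \pi \cdot u(a', i)$ is a pre-normal form in direction $\pi(a)$ by the inductive hypothesis.

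Applied to the transposition $\pi = (a\; a')$, this gives that $z := (a\; a') \cdot z'$ is a pre-normal form in direction $a$. Computing its image under substitution,
\[
z(a := 1) \;=\; \bigl((a\; a') \cdot z'\bigr)(a := 1) \;=\; (a\; a') \cdot \bigl(z'(a' := 1)\bigr) \;=\; (a\; a') \cdot w,
\]
and since $a \,\#\, w$ by hypothesis and $a' \,\#\, w$ by axiom (1) noted above, the transposition $(a\; a')$ acts trivially on $w$, so $z(a := 1) = w$ as required. Uniqueness of such a $z$ is then exactly the content of lemma \ref{lem:bindinj}. There is no real obstacle to this proof: the only non-bookkeeping step is the equivariance lemma for $P^0_Y X$, and that is essentially built into how definition \ref{def:4} is phrased.
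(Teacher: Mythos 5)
Your proof is correct and is exactly the argument the paper leaves implicit (the corollary is stated without proof): uniqueness is Lemma \ref{lem:bindinj}, and existence follows by renaming the given direction to $a$ via a transposition, using the equivariance of pre-normal forms already noted in Definition \ref{def:4} together with $a \# w$ and $a' \# w$ (the latter from axiom (1) of 01-substitution sets) to see the transposition fixes $w$.
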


Unlike the path objects from the awfs, this construction is stable
under pullback, as we show below.
\begin{theorem}
  \label{thm:2}
  Given $f : X \rightarrow Y$, write $p_f$ for the map $P_Y X
  \rightarrow Y$ given by composition of $\rho_\Delta$ with the
  canonical map $X \times_Y X \rightarrow Y$. Suppose we are given a
  commutative square
  \begin{equation}
    \label{eq:20}
    \begin{gathered}
      \xymatrix{ U \ar[r]^h \ar[d]_g & X \ar[d]_f \\
        V \ar[r]^k & Y}
    \end{gathered}
  \end{equation}

  Then we can define a map $P_V U \rightarrow P_Y X$ making the
  following square commute.
  \begin{equation}
    \label{eq:21}
    \begin{gathered}
      \xymatrix{ P_V U \ar[r] \ar[d]_{p_g} & P_Y X \ar[d]_{p_f} \\
        V \ar[r]^k & Y}
    \end{gathered}
  \end{equation}

  Furthermore, if \eqref{eq:20} is a pullback square, then so is
  \eqref{eq:21}. 
\end{theorem}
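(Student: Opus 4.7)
The plan is to construct the map $P_V U \to P_Y X$ by exploiting functoriality of $K$. The square \eqref{eq:20} induces a morphism in $\sub^\btwo$ from $\Delta_g : U \to U \times_V U$ to $\Delta_f : X \to X \times_Y X$, using $h$ on the source and the map $U \times_V U \to X \times_Y X$ induced by $h$ via the universal property of $X \times_Y X$. Applying $K$ yields a morphism $K \Delta_g \to K \Delta_f$ of 01-substitution sets. I would then show by structural induction on pre-normal forms that this map restricts to $P^0_V U \to P^0_Y X$ and preserves direction, with the base case using that the component map $U \to X$ preserves freshness, and the inductive case using the explicit formula sending $(u, (x_1, x_2))$ to $(K(h,h') \circ u, (h(x_1), h(x_2)))$. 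Since the restriction is a morphism in $\sub$ and commutes with substitution, it extends to a map $P_V U \to P_Y X$ on normal forms. Commutativity of \eqref{eq:21} is inherited from the lower square of the factorisation together with the definition of $p_g$ and $p_f$.

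For the pullback assertion, assume \eqref{eq:20} is a pullback, so $U \cong V \times_Y X$ with componentwise action, and $U \times_V U \cong V \times_Y (X \times_Y X)$. I would construct an inverse to the canonical map $P_V U \to V \times_Y P_Y X$. Given a compatible pair $(v, w)$, pick a fresh name $a$ and use Corollary \ref{cor:pnfunique} to obtain the unique pre-normal form $z$ in direction $a$ with $z(a := 1) = w$. The core technical step is an inductive lemma: for any $v' \in V$ and pre-normal form $z \in P^0_Y X$ in direction $a$ over $(x_1, x_2)$ with $k(v') = f(x_1)$, there exists a unique pre-normal form $z' \in P^0_V U$ in direction $a$ with base $(v', x_1) \in U$ whose image under the forward map is $z$. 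In the base case ($z = x_1 \in X$ with $a \# x_1$), set $z' := (v', x_1)$, which lies in $U = V \times_Y X$ by the compatibility assumption. In the inductive case $z = (u, (x_1, x_2))$, apply the inductive hypothesis to each $u(a', i)$ with the parameter $v'(a' := i) \in V$; the compatibility hypothesis is preserved because the adjacency condition on $u$ gives $f(u(a', i)(a := 0)) = f(u(a, 0)(a' := i)) = f(x_1(a' := i)) = k(v'(a' := i))$. Assembling the lifts $u'(a', i)$ together with $u'(a, 0) := (v', x_1)$ yields the required pre-normal form $z'$.

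The main obstacle will be verifying that the lifted box $u'$ in the inductive case really satisfies the adjacency and freshness conditions required of an open box in $U$. For adjacency, one needs $u'(a', i)(a'', i'') = u'(a'', i'')(a', i')$; since substitution on $U \cong V \times_Y X$ is componentwise, both sides are pre-normal form lifts of the same underlying element of $P^0_Y X$ starting from the same base $v'(a' := i)(a'' := i'') = v'(a'' := i'')(a' := i')$ in $V$, so the uniqueness clause of the inductive lemma forces them to coincide. Freshness is similarly preserved componentwise. Finally, Lemma \ref{lem:bindinj} and Corollary \ref{cor:pnfunique} ensure that passing from pre-normal forms to normal forms is an equivalence, so the inductively constructed $z'$ descends to a well-defined inverse map $V \times_Y P_Y X \to P_V U$, completing the proof that \eqref{eq:21} is a pullback.
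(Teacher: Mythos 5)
Your proposal is correct and follows essentially the same route as the paper: the forward map is obtained by induction on the construction of $P^0$ (your description via restricting the functorial map $K\Delta_g \to K\Delta_f$ amounts to the same inductive definition), and the pullback claim is proved by showing the canonical map $P_V U \to V \times_Y P_Y X$ is an isomorphism, using the pullback \eqref{eq:20} for the base elements and the inductive hypothesis for the open boxes. The only step you leave implicit is the paper's ``key point'': that the compatibility $k(v) = p_f(w)$ actually gives $k(v) = f(x_1) = f(x_2)$ for the base of the pre-normal form, which relies on the requirement $a \# x_1$ in definition \ref{def:4} (since $p_f(z(a:=1)) = f(x_1(a:=1)) = f(x_1)$), and is exactly what lets your inductive lemma be applied to the given pair $(v,w)$.
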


\begin{proof}
  The morphisms $P_V U \rightarrow P_YX$ can easily be defined by
  using corollary \ref{cor:pnfunique} to lift to $P^0$, and then
  working by induction on the construction of $P^0$.

  Now assume that \eqref{eq:20} is a pullback. To check that
  \eqref{eq:21} is a pullback, it suffices to show that the canonical
  map $P_V U \rightarrow V \times_Y P_Y X$ is an isomorphism. So we
  need to show that given $(v, x) \in V \times_Y P_Y X$, there is a
  unique $u \in P_V U$ mapped to $(v, x)$. This is done by induction on the
  construction of $P^0$. If $x \in X$, then we can use the fact that
  \eqref{eq:20} is a pullback. We now deal with the case where $x$ is
  of the form $\langle a \rangle(w, (x_1, x_2))$. The key point here
  is that we required in definition \ref{def:4} that $a \#
  x_1$. Hence, writing $\pi$ for the canonical map $X \times_Y X
  \rightarrow Y$, we can deduce
  \begin{align}
    p_f (w, (x_1, x_2)) &= \pi(\rho_\Delta (w, (x_1, x_2))) \\
    &= \pi(x_1 (a := 1), x_2 (a := 1)) \\
    &= f(x_1 (a := 1)) \\
    &= f(x_1) & \text{(since } a \# x_1 \text{)}
  \end{align}
  Since $(v, x) \in V \times_Y P_Y X$ we must also have $p_f((w, (x_1,
  x_2))) = k(v)$, and so $k(v) = f(x_1) = f(x_2)$. But now using that
  \eqref{eq:20} is a pullback, we have uniquely specified $u_1, u_2$
  in $U$ such that $g(u_1) = g(u_2) = v$, $h(u_1) = x_1$ and $h(u_2) =
  x_2$. Applying the inductive hypothesis, we also have a uniquely
  determined open box over $w$, so there is a unique element of $P_V
  U$ mapped to $\langle a \rangle(w, (x_1, x_2))$, as required.
\end{proof}

We now show that this construction does give us path
objects.

\begin{theorem}
  The restriction of $\rho_\Delta : K \Delta \rightarrow X \times_Y X$
  to $P_Y X$ is a fibration.
\end{theorem}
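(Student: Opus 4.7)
The plan is to construct both a 1-open and a 0-open Kan filling operator for $\rho_\Delta$ restricted to $P_Y X$. The two cases will be symmetric, so I will describe only the 1-open case. The central tool will be corollary \ref{cor:pnfunique}: once a fresh name $a$ is chosen, every element of $P_Y X$ is uniquely $z(a := 1)$ for some pre-normal form $z$ in direction $a$. This lets one convert a filling problem in $P_Y X$ into the construction of a single pre-normal form whose $(a := 1)$-substitution is the desired filler.

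Given a 1-open $(B, b)$-box $v$ in $P_Y X$ over $(x_1, x_2)$, I will pick $a$ fresh from all the data and, for each face $v(b', i)$ with $(b', i) \neq (b, 1)$, take $z_{b', i}$ to be the unique pre-normal form in direction $a$ with $z_{b', i}(a := 1) = v(b', i)$. The filler will then be $w := \tilde z(a := 1)$, where $\tilde z = (\tilde u, (x_1, x_2'))$ is a pre-normal form in direction $a$ whose underlying 1-open $(B \cup \{a\}, a)$-box $\tilde u$ has $\tilde u(a, 0) = x_1$, $\tilde u(b', i) = z_{b', i}$ at the above faces, and $\tilde u(b, 1)$ defined as a nested pre-normal form $(u', (x_1(b := 1), y_2'))$. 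The inner box $u'$ will be the 1-open $((B \setminus \{b\}) \cup \{a\}, a)$-box with $u'(a, 0) = x_1(b := 1)$ and $u'(b', i) = z_{b', i}(b := 1)$ (the latter being pre-normal in direction $a$ by the closure property in definition \ref{def:4}), and $y_2' \in X$ will be produced by applying the Kan filling operator of $f$ to the corresponding 1-open box in $X$ over $f(x_1)(b := 1) \in Y$. The second endpoint $x_2' \in X$ will be obtained by further use of $f$'s Kan filling structure, arranging that $x_2'(a := 1) = x_2$.

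The verification will split into three tasks. First, I will check that $\tilde u$ is a well-defined 1-open box: the adjacency among the $z_{b', i}$ follows from the adjacency of $v$ via the uniqueness clause of corollary \ref{cor:pnfunique} and lemma \ref{lem:bindinj}; adjacency with the $(a, 0)$-face will use $z_{b', i}(a := 0) = x_1(b' := i)$, forced by the pre-normal form structure; and the conditions involving $(b, 1)$ are exactly what $u'$ has been engineered to deliver. Second, I will verify that $w$ truly fills $v$: the face values $w(b', i) = \tilde u(b', i)(a := 1) = z_{b', i}(a := 1) = v(b', i)$ come directly from the substitution rule for $K^+ \Delta$, and $\rho_\Delta(w) = (x_1, x_2)$ follows from $x_2'(a := 1) = x_2$. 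Third, I will check the uniformity conditions, which propagate from the uniqueness in corollary \ref{cor:pnfunique} and the uniformity of $f$'s own Kan filling operators.

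The main obstacle will be the coherent construction of $x_2'$: its substitution behaviour is prescribed at every face of $(B \cup \{a\}) \times 2$, including both $x_2'(a := 0) = x_1$ and $x_2'(a := 1) = x_2$, so a single application of $f$'s Kan filling operator will not suffice. I expect to obtain $x_2'$ by combining 1- and 0-open fillings for $f$, exploiting both the freshness of $a$ for $x_2$ and the identity $f(x_1) = f(x_2)$ coming from the pullback $X \times_Y X$. All the required compatibility is ultimately controlled by the commutativity laws for $01$-substitutions, but the bookkeeping in this step will be the most delicate part of the proof.
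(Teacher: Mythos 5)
Your overall strategy is the paper's: lift the faces of the given box to pre-normal forms in a fresh direction via corollary \ref{cor:pnfunique}, assemble a nested pre-normal form whose second target component comes from the Kan filling structure of $f$, and take a substitution of it in the fresh direction as the filler. However, there is a genuine gap exactly at the step you flag as the ``main obstacle'', and your proposed remedy would not close it. The difficulty is self-inflicted by the order of construction: you first produce $y_2'$ by an independent application of $f \uparrow$ to the inner box, and only afterwards look for $x_2'$; at that point $x_2'$ is prescribed on \emph{every} face of $(B \cup \{a\}) \times 2$, i.e.\ on a closed box, and a fibration structure gives no way to fill a box with no missing face. Neither uniformity condition helps: they cover permutations and substitutions of names fresh for the box's index set, so nothing forces $f \uparrow$ of your inner box to agree with the $(b := 1)$-face of a filler of the outer one, and the facts $f(x_1) = f(x_2)$ and $a \# x_2$ do not repair this.

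The paper avoids the problem by reversing the order. The $(b,1)$-face of $x_2'$ is not genuinely prescribed by the lifting problem: only the faces $(a,0) \mapsto x_1$, $(a,1) \mapsto x_2$ and $(b',i) \mapsto \pi_2 \rho_\Delta z_{b',i}$ for $(b',i) \in B \times 2 \setminus \{(b,1)\}$ are forced, and these form a single $1$-open $(B \cup \{a\}, b)$-box in $X$ --- open in the \emph{original} direction $b$ rather than the fresh direction $a$, so there is no conflict in prescribing both $a$-faces. One application of $f \uparrow$ to this box yields $x_2'$ (the paper's $f \uparrow v''$), and the second target component of the nested pre-normal form is then \emph{defined} to be the Kan composition $y_2' := x_2'(b := 1)$ (the paper's $f^+ v''$), after which the inner compatibilities you worry about hold automatically. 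With that reordering, the rest of your outline --- the face computations via the $K^+\Delta$ substitution rule, the value of $\rho_\Delta$ on the filler, and uniformity from corollary \ref{cor:pnfunique} together with the uniformity of $f$'s operators --- goes through essentially as in the paper.
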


\begin{proof}
  Let $v$ be a $1$-open $A, a$-box in $P_Y X$ over $(x_1, x_2)$. Let
  $b$ be a fresh variable. By applying corollary \ref{cor:pnfunique}
  we get a 1-open $A,a$-box $v'$ over $P_Y^0 X$ satisfying the
  adjacency conditions by lemma \ref{lem:bindinj} and such that for
  each $(a',i) \in A \times 2 \setminus (a,1)$, $v'(a',i)$ is a
  pre-normal form in direction $b$. Note that $\pi_2 \circ \rho_\Delta
  \circ v'$ is an open box in $X$ (where $\pi_2$ is the second
  projection). Let $A'$ be $A \cup \{b\}$. We extend
  $\pi_2 \circ \rho_\Delta \circ v'$ to a $1$-open $A', a$-box $v''$
  by setting $v''(b,0) = x_1$ and $v''(b,1) = x_2$.

  Let $A''$ be $A' \setminus a$. We define a $1$-open $A'',b$-box, $w$
  as follows. Define $w(b,0)$ to be $x_1 (a := 1)$. For $(a', i) \in
  A' \times 2$, define $w(a',i)$ to be $v'(a',i) (a := 1)$. Note that
  $(w, (x_1 (a := 1), f^+ v''))$ is a pre-normal form.

  We now form a $1$-open $A', b$-box, $w'$ as follows. Set $w'(b, 0)$
  to be $x_1$. Set $w'(a, 1)$ to be $(w, (x_1 (a := 1), f^+ v''))$. For
  $(a',i) \in A \times 2 \setminus (a,1)$, define $w'(a',i)$ to be
  $v'(a',i)$.

  Finally this allows to define the Kan filler of $v$ to be
  \begin{equation}
    \label{eq:14}
    (w', (x_1, f \uparrow v'')) (b := 1)
  \end{equation}
\end{proof}

We now show that $r$ is a left map. The key point in the lemmas below
is to ensure that we always treat elements of the $X$ component in
$P^0_Y X$ as degenerate paths, while also ensuring substitutions are
preserved.

\begin{lemma}
  \label{lem:makehomotopy}
  There is a nominal set morphism $h : P^0_Y X \ast \names \rightarrow
  X$ such that given $z$ in direction $a$, with $b$ a fresh name, and
  $a'$ a name with $a' \neq a$ and $a' \neq b$, we have $h(z (a' :=
  i), b) = h(z, b) (a' := i)$, $(h(z, b) (b := 0),
  h(z, b) (b := 1)) = \rho_\Delta z$, and for $x \in X$, $h(x, b) = x$.
\end{lemma}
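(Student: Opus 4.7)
The plan is to define $h$ by induction on the inductive definition of pre-normal forms in Definition \ref{def:4}, using the Kan filling operator $f\uparrow$ of the fibration $f$. Along the way we maintain two auxiliary invariants: writing $\rho_\Delta z = (x_1, x_2)$, we require $h(z, b)(a := 0) = x_1$ and $f(h(z, b)) = f(x_1)$.

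The base case $z = x \in X$ with $a \# x$ is handled by setting $h(x, b) := x$; since $b \# x$ by the separated product, and $\rho_\Delta x = (x, x)$, the conclusions of the lemma and both invariants hold trivially.

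For the inductive step, given $z = (u, (x_1, x_2))$ where $u$ is a $1$-open $A, a$-box satisfying Definition \ref{def:4} (and noting $b \neq a$, $b \notin A$, and $a, b \# f(x_1)$ follow from the separated product together with $a \# x_1$), construct a $1$-open $A \cup \{b\}, a$-box $v$ in $X$ over $f(x_1) \in Y$ by $v(a, 0) := x_1$, $v(b, 0) := x_1$, $v(b, 1) := x_2$, and $v(a', i) := h(u(a', i), b)$ for $(a', i) \in (A \setminus \{a\}) \times 2$; then set $h(z, b) := f \uparrow v$. Verifying $v$ is a valid open box requires checking adjacency across all face-pairs: pairs involving $b$ use the inductive endpoint condition, pairs involving $a$ and a face in $A \setminus \{a\}$ use the first auxiliary invariant, pairs within $A \setminus \{a\}$ use the inductive substitution naturality, and the mixed pair $(a, 0),(b, j)$ uses the identity $x_2(a := 0) = x_1$ forced by $u(a, 0) = x_1 \in X$ together with $\rho_\Delta u(a, 0) = (x_1, x_2)(a := 0)$. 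Compatibility of $v$ with $f(x_1)$ over $f$ uses the second auxiliary invariant. All lemma claims then follow from the Kan filler properties combined with the uniformity conditions on $f\uparrow$: the endpoint condition and preservation of both invariants are immediate from $(f\uparrow v)(c := i) = v(c, i)$ and $f(f\uparrow v) = f(x_1)$; equivariance uses permutation uniformity together with inductive equivariance; and substitution naturality for $a' \notin \{a, b\}$ splits into the case $a' \in A \setminus \{a\}$ (direct from the filler identity $(f \uparrow v)(a', i) = v(a', i)$) and $a' \notin A$ (using substitution uniformity, which applies since then $a' \notin A \cup \{b\}$).

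The main obstacle is the combinatorial bookkeeping of the two invariants together with the adjacency conditions on $v$; the hard case is the substitution naturality step for $a' \notin A$, where one must verify that $v(a' := i)$ coincides with the box constructed for $z(a' := i)$, so that substitution uniformity of $f \uparrow$ yields the desired equality.
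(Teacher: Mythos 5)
Your construction is exactly the paper's: define $h$ by induction on pre-normal forms, taking $h(x,b)=x$ on the base case and, for $(u,(x_1,x_2))$, extending to a $1$-open $(A\cup\{b\},a)$-box with faces $h(u(a',i),b)$, $v(b,0)=x_1$, $v(b,1)=x_2$ (your explicit $v(a,0)=x_1$ coincides with the paper's $h(u(a,0),b)$), and applying $f\uparrow$. The auxiliary invariants and the adjacency/uniformity verifications you supply are correct details that the paper leaves implicit, so this is the same proof, just more fully checked.
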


\begin{proof}
  We define $h$ by induction on the construction of $P^0_Y X$. We
  define $h(x, b)$ to be $x$. Suppose we are given a pre-normal form
  of the form $(u, (x_1, x_2))$, where $u$ is a $1$-open $A,a$-box and
  $b$ a fresh name. Let $A' := A \cup \{b\}$. We form a $1$-open $A',
  a$-box, $v$ in $X$ as follows. If $(a', i)$ is an element of $A
  \times 2 \setminus (a,1)$, then we may assume by induction that
  $h(u(a',i), b)$ has already been defined. Let $v(a',i)$ be
  $h(u(a',i), b)$. Let $v(b,0)$ be $x_1$ and let $v(b,1)$ be
  $x_2$. Now define $h((u, (x_1, x_2)), b)$ to be $f \uparrow v$.
\end{proof}

To illustrate $h$, consider when $x_2$ is a path with $x_1$ as one of
the endpoints. In this case, $h$ is a homotopy from the constant path
at $x_1$ to the path $x_2$. If we are given an element $x$ of the $X$
component of $P^0_Y X$, then we think of it as the degenerate path
from $x$ to itself, and then return the degenerate homotopy from an
endpoint of the degenerate path to the path (and in this case all of
the objects mentioned happen to be equal as elements of $X$).

Next, in the lemma below we produce a homotopy from a path to its
endpoint, this time viewing paths are elements of $P^0_Y X$ rather
than elements of $X$, so we have some extra structure to take care of.

\begin{lemma}
  \label{lem:makehomotopy2}
  There is a nominal set morphism $k : P^0_Y X \ast \names \rightarrow
  P^0_Y X$ such that if $a$ is the direction of $z \in P^0_Y X$, $b$
  is a fresh name and $a'$ is a name with $a' \neq a$ and $a' \neq b$,
  then $k(z (a' := i), b) = k(z, b) (a' := i)$, and for $x \in X$ we
  have $k(x, b) = x$ and such that if $z \in P^0_Y X$ and $\rho_\Delta
  (z) = (x_1, x_2)$, then $k(z, b) (b := 0) = x_1$ and $k(z, b) (b :=
  1) = z$. $k$ will also be ``direction preserving.''
\end{lemma}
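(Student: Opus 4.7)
The plan is to define $k$ by structural induction on the construction of pre-normal forms, following the template of lemma~\ref{lem:makehomotopy} but producing pre-normal forms in $P^0_Y X$ rather than plain elements of $X$. For the base case $z = x \in X$, I set $k(x, b) := x$; since $b$ is fresh for $x$, all stated conditions hold trivially.

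For the inductive step, suppose $z = (u, (x_1, x_2))$ is a pre-normal form in direction $a$ with $u$ a $1$-open $A, a$-box. I assemble a $1$-open $(A \cup \{b\}), a$-box $v$ valued in $K\Delta$, setting $v(a, 0) := x_1$, $v(b, 0) := x_1$, $v(b, 1) := z$, and $v(a', i) := k(u(a', i), b)$ for $(a', i) \in (A \setminus \{a\}) \times 2$ (well-defined by induction). Freshness and adjacency of $v$ follow from the pre-normal form conditions on $z$---in particular $a \# x_1$ and the identity $x_2(a := 0) = x_1$, which is read off from $\rho_\Delta(z)(a := 0) = \rho_\Delta(x_1) = (x_1, x_1)$---combined with freshness of $b$ and the inductive boundary identities $k(u(a', i), b)(b := 0) = x_1(a' := i)$ and $k(u(a', i), b)(b := 1) = u(a', i)$.

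For $(v, (x_1, y_2))$ to be an element of $K^\uparrow \Delta$ I need $y_2 \in X$ with $f(y_2) = f(x_1)$ and the correct boundary behaviour. I construct $y_2$ by Kan filling: form a parallel $1$-open $(A \cup \{b\}), a$-box $w$ in $X$ over $f(x_1) \in Y$, with $w(a, 0) := x_1$, $w(b, 0) := x_1$, $w(b, 1) := x_2$, and $w(a', i)$ equal to the second coordinate of $\rho_\Delta(v(a', i))$, which lies in $X$ by the inductive pre-normal form structure. That $w$ lies over $f(x_1)$ follows because by induction the first coordinate of $\rho_\Delta(v(a', i))$ equals $x_1(a' := i)$, and the pair being in $X \times_Y X$ forces the second coordinate to map to the same element $f(x_1)(a' := i) \in Y$. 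Using that $f$ is a fibration, set $y_2 := f \uparrow w$ and define $k(z, b) := (v, (x_1, y_2))$.

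The required properties are then verified by induction. The boundary identities $k(z, b)(b := 0) = x_1$ and $k(z, b)(b := 1) = z$ follow directly from the substitution rules of $K^\uparrow \Delta$ applied to $v$; substitution compatibility $k(z(a' := i), b) = k(z, b)(a' := i)$ for $a' \neq a, b$ reduces when $a' \in A$ to $k(u(a', i), b) = v(a', i)$, and when $a' \notin A$ follows from equivariance of the construction together with the uniformity of Kan filling. Direction preservation is automatic. The main obstacle is the careful verification that the box $w$ has consistent adjacency data and indeed lies over $f(x_1)$; this relies crucially on the inductive boundary identities together with the degeneracy $x_2(a := 0) = x_1$ built into pre-normal forms in direction $a$, which together make the Kan filling well-defined and yield a valid pair $(x_1, y_2) \in X \times_Y X$.
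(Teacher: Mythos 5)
Your construction is correct and is essentially the paper's: you assemble exactly the same $1$-open $(A\cup\{b\}),a$-box $v$ (with $v(b,0)=x_1$, $v(b,1)=z$, and $v(a',i)=k(u(a',i),b)$), and your second component $y_2 = f\uparrow w$ is precisely the homotopy $h(z,b)$ of lemma~\ref{lem:makehomotopy}, whose defining box you have merely inlined (its sides being the second coordinates of $\rho_\Delta(v(a',i))$, which by induction are $h(u(a',i),b)$) instead of citing that lemma. The paper simply sets $k((u,(x_1,x_2)),b) := (v,(x_1,h((u,(x_1,x_2)),b)))$, so apart from this organisational difference the arguments coincide, and your verification of adjacency via $x_2(a:=0)=x_1$ and the inductive boundary identities is sound.
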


\begin{proof}
  We define $k(x, b)$ to be $x$ for $x \in X$. Now suppose we
  are given a pre-normal form of the form $(u, (x_1, x_2))$, where $u$
  is a $1$-open $A,a$-box and $b$ a fresh name. Let $A' := A \cup
  \{b\}$ as before. We form a $1$-open $A', a$ box $v$. Given $(a',i)
  \in A \times 2 \setminus (a, 1)$, set $v(a',i)$ to be $k(u(a',i),
  b)$. Set $v(b,0)$ to be $x_1$, and set $v(b,1)$ to be $(u, (x_1,
  x_2))$. Finally, define $k((u, (x_1, x_2)), b)$ to be $(v, (x_1,
  h((u, (x_1, x_2)), b)))$, where $h$ is as in lemma
  \ref{lem:makehomotopy}.
\end{proof}

\begin{theorem}
  The inclusion $r : X \rightarrow P_Y X$ is an element of
  $\bar{\mathcal{L}}$.
\end{theorem}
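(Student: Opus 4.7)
By Theorem \ref{thm:fibdefs}, it suffices to exhibit, for every fibration $g : U \to V$ and every commutative square with $\alpha : X \to U$ and $\beta : P_YX \to V$ satisfying $g\alpha = \beta r$, a diagonal filler $\gamma : P_YX \to U$. My strategy is to use the homotopy $k$ from Lemma \ref{lem:makehomotopy2} as a deformation retraction of $P_YX$ onto $X$ and then to lift along this homotopy using the Kan filling operation of $g$.

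For each $w \in P_YX$ I will split into two cases, which is legitimate because the rank (Lemma \ref{lem:rank}) makes membership of $w$ in the $X$ component of $K\Delta$ decidable. If $w$ lies in $X$, set $\gamma(w) := \alpha(w)$. Otherwise $w = \langle a \rangle z$ where, for any fresh $a$, $z = (u, (x_1, x_2))$ is the unique pre-normal form in direction $a$ with $z(a := 1) = w$ (Corollary \ref{cor:pnfunique}). After choosing a further fresh $b \neq a$, form $k(z, b)(a := 1) \in P_YX$, which is a path parameterised by $b$ equal to $r(x_1)$ at $b := 0$ and to $w$ at $b := 1$. The pair with $(b,0)$-face $\alpha(x_1)$ and base $\beta(k(z, b)(a := 1))$ is then a $1$-open $\{b\}, b$-box over $g$: the matching $g(\alpha(x_1)) = \beta(k(z, b)(a := 1))(b := 0)$ holds because $g\alpha = \beta r$ together with $k(z, b)(a := 1)(b := 0) = r(x_1)$. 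Applying $g \uparrow$ yields a filler $\bar x$, and I define $\gamma(w) := \bar x(b := 1)$. With this definition $\gamma r = \alpha$ is immediate from the base case, and $g\gamma = \beta$ follows from $g(\bar x)(b := 1) = \beta(k(z, b)(a := 1))(b := 1) = \beta(w)$.

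The main work will be verifying that $\gamma$ is a morphism of $\sub$, i.e.\ equivariant and compatible with the substitution actions. Equivariance, and independence of the choices of $a$ and $b$, should follow from Corollary \ref{cor:pnfunique}, the equivariance of $k$, and the permutation clause of the uniformity of $g \uparrow$. For a substitution $(c := i)$ with $c \notin \{a, b\}$ whose application keeps $w$ inside the $K^+\Delta$ component, I expect the intertwining property $k(z(c := i), b) = k(z, b)(c := i)$ from Lemma \ref{lem:makehomotopy2}, combined with the substitution clause of the uniformity of $g \uparrow$, to reduce the check to a direct calculation matching the Kan filler of the substituted box.

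The principal obstacle, I believe, will be the boundary case where the substitution drops $w$ back into the $X$ component, which happens precisely when $c \in A$ and the face $u(c, i)$ already lies in $X$. There $\gamma(w(c := i))$ is simply $\alpha(u(c, i))$, while $\gamma(w)(c := i) = \bar x(c := i)(b := 1)$ computes, via the substitution uniformity, to the $b := 1$ value of the Kan filler of a box that has become constant in $b$, with $(b,0)$-face $\alpha(u(c, i))$ and base $g(\alpha(u(c, i)))$. Since Kan fillers of constant boxes are not automatically themselves constant, this equality is not immediate from the bare uniformity conditions. My plan for closing this gap is to upgrade the pointed-endofunctor algebra structure on $g$ to the full $\monadf{R}$-algebra structure supplied by Theorem \ref{thm:fibdefs} and to apply its multiplication law, comparing the degenerate filler to the canonical lift $\lambda_g(\alpha(u(c, i)))$ so as to force agreement with $\alpha(u(c, i))$.
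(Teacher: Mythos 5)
Your reduction of the statement to a lifting problem against fibrations is legitimate (indeed a single lift, against the $\monadf{R}$-map $\rho_r$ in the square $r = \rho_r\lambda_r$, would already give the copointed coalgebra the paper constructs), and your verification that $\gamma r = \alpha$ and $g\gamma = \beta$ hold pointwise is fine. But the "principal obstacle" you flag at the end is not a loose end to be tidied; it is a genuine gap, and the fix you propose does not close it. The multiplication law of an $\monadf{R}$-algebra does not force the lift of a box that is degenerate in the filling direction to have degenerate far face. The free algebras themselves are counterexamples: take $g = \rho_f : Kf \rightarrow Y$ with its algebra structure $\pi_f$, and a one-point $(\{b\},b)$-box $u_0$ with face $u_0(b,0) = x_0$, $b \,\#\, x_0$, over a base $y_0$ with $b \,\#\, y_0$. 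The filler supplied by $\pi_f$ is the freely added element $(u_0, y_0) \in K^\uparrow f$, and its $(b := 1)$-face is $\langle b \rangle (u_0, y_0) \in K^+ f$, which lies in a different summand of the coproduct and is certainly not $x_0$. (This is exactly the phenomenon driving Theorem \ref{thm:1}: the construction freely adds the far endpoint, which carries the data of the whole path.) Comparing with $\lambda_g(\alpha(u(c,i)))$ does not help either: the unit law only controls $G$ on rank-$0$ elements of $Kg$, and nothing identifies the degenerate box element of $K^\uparrow g$ with that rank-$0$ element. Consequently, in your boundary case $\gamma(w)(c := i) \neq \gamma(w(c := i))$ in general, so the $\gamma$ you define is not a morphism of $\sub$, and no choice of (even monad) algebra structure on $g$ repairs this.

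The paper circumvents exactly this obstruction by not asking the fibration $g$ to fill degenerate boxes degenerately. Instead it constructs a copointed $\monadf{L}$-coalgebra $c : P_Y X \rightarrow K r$ directly, by recursion on the pre-normal form structure: the auxiliary map $l : P^0_Y X \ast \names \rightarrow K r$ sends an element $x$ of the $X$ component to $x$ itself (a rank-$0$ element) and sends a box $(u,(x_1,x_2))$ to a freely added filler in $K^\uparrow r$ whose faces are the recursively computed $l(u(a',i),b)$, with base supplied by $k$ from Lemma \ref{lem:makehomotopy2}. Because degenerate elements are mapped to rank-$0$ elements by definition at every stage, the compatibility with the substitutions that collapse a normal form into $X$ holds by construction in the free object $Kr$, rather than being a property demanded of an arbitrary filling operator; lifts against arbitrary fibrations are then obtained by composing $c$ with the algebra structure map, where the problematic identification is never needed. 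If you want to salvage your approach, you should replace the single one-dimensional lift per element by this kind of recursion on pre-normal forms valued in $Kr$ --- at which point you have essentially reconstructed the paper's proof.
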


\begin{proof}
  We need to define a copointed endofunctor coalgebra on $r$.

  We first define a morphism $l : P^0_Y X \ast \names \rightarrow K
  r$, ensuring that if $z \in P^0_Y X$ is in direction $a$, then $a$
  is fresh for $l(z, b)$. As before, we ensure that $l$ is a nominal
  set morphism that preserves substitutions $(a' := i)$ for $a' \neq
  a, b$. We will also ensure that $\rho_r(l(z, b)) = k(z, b) (a :=
  1)$.
  
  Define $l(x, b)$ to be $x$. Given $(u, (x_1, x_2))$ and $b$,
  where $u$ is an $A, a$-box, let $A' := (A \setminus a) \cup
  \{b\}$. We define a $1$-open $A', b$-box, $v$ as follows. Given
  $(a',i) \in (A \setminus a) \times 2$, define $v(a',i)$ to be
  $l(u(a',i), b)$. Define $v(b, 0)$ to be $x_1$. Then define $l((u,
  (x_1, x_2)), b)$ to be $(v, k((u, (x_1, x_2)),
  b) (a := 1))$, where $k$ is as in lemma \ref{lem:makehomotopy2}.

  This now allows us to define the coalgebra map $c: P_Y X \rightarrow
  K r$ by defining $c(z (a := 1))$ (for $z$ a prenormal form in
  direction $a$) to be $l(z, b) (b := 1)$, where $b$ is any fresh
  variable (so $c((u, (x_1, x_2)) (a := 1))$ will be an element of
  $K^+ r$). Note that we have ensured throughout that given an element
  $x$ of $X$, $c(x) = x$.  Finally to show that this is a coalgebra
  structure we verify the counit law below.
  \begin{align}
    \label{eq:1}
    \rho_r (c(z (a := 1))) &= \rho_r (l(z,b) (b := 1)) \nonumber \\
    &= \rho_r (l(z, b)) (b := 1) \nonumber \\
    &= k(z, b) (a := 1) (b := 1) \nonumber \\
    &= k(z, b) (b := 1) (a := 1) \nonumber \\
    &= z (a := 1)
  \end{align}
\end{proof}

\begin{corollary}
  \label{cor:homotopyispathobj}
  Assume that excluded middle holds. Then $[\names]_f X$ is a path
  object.
\end{corollary}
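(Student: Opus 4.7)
The plan is to use excluded middle to render the freshness predicate $a \# x$ decidable, which makes the image of the reflexivity map $r : X \to [\names]_f X$ a decidable subobject: each $\langle a \rangle x \in [\names]_f X$ can then be classified as either \emph{degenerate} (in the image of $r$, i.e.\ $a \# x$) or \emph{non-degenerate} ($\neg(a \# x)$).

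Given a lifting problem against an $\monadf{R}$-map $g : U \to V$ with $h : X \to U$ and $k : [\names]_f X \to V$ making the square commute, I would define the diagonal $j : [\names]_f X \to U$ by case analysis. On $\langle a \rangle x$ with $a \# x$, set $j(\langle a \rangle x) := h(x)$, well-defined since $r$ is injective and the square commutes. On $\langle a \rangle x$ with $\neg(a \# x)$, use the Kan filling operator on $g$ provided by theorem \ref{thm:fibdefs}, applied to an open box built from the endpoints $h(x(a := 0))$, $h(x(a := 1))$ and the base $k(\langle a \rangle x)$, possibly with extra faces supplied via fresh auxiliary names so that the adjacency and base-substitution conditions can be made to hold.

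An alternative, perhaps cleaner, route is to reduce to the previous theorem that $r_P : X \to P_Y X$ is in $\bar{\mathcal{L}}$: under excluded middle, construct a morphism $s : [\names]_f X \to P_Y X$ in $\sub$ with $s \circ r = r_P$, sending a degenerate $\langle a \rangle x$ to $x$ in the $X$-component of $P_Y X$ and a non-degenerate one to the canonical normal form $\langle a \rangle (u, (x(a := 0), x(a := 1)))$ with $u(a, 0) = x(a := 0)$. Any lifting problem for $r$ then converts to one for $r_P$, whose solution, composed with $s$, yields the required diagonal.

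The main obstacle, common to both approaches, is that substitution can change the degeneracy of $\langle a \rangle x$: substituting $b \neq a$ may turn a non-degenerate element into a degenerate one, since $\neg(a \# x)$ does not entail $\neg(a \# x(b := i))$. Verifying that the case-split definition of $j$ (or $s$) is substitution-preserving therefore requires a delicate classical case analysis at each substitution, together with an appeal to the uniformity conditions on the Kan filling operator so that the canonical choices on either side of the boundary agree. This bookkeeping is the technical heart of the proof; everything else reduces to routine checks.
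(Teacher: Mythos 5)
Your second route is in outline the paper's own: reduce to the theorem that $r_P : X \rightarrow P_Y X$ is in $\bar{\mathcal{L}}$ by building, classically, a comparison map $[\names]_f X \rightarrow P_Y X$ over $X$ and over $X \times_Y X$ (the paper then transfers the coalgebra structure along the projection $P_Y X \rightarrow [\names]_f X$ rather than re-solving lifting problems, but that difference is cosmetic). The genuine gap is in the construction of that comparison map. Sending a non-degenerate $\langle a \rangle x$ to ``the canonical normal form $\langle a \rangle (u,(x(a:=0),x(a:=1)))$ with $u(a,0)=x(a:=0)$'' is not enough: for the image to lie in $P_Y X$ at all, every face $u(a',i)$ with $a' \neq a$ must itself be a pre-normal form in direction $a$ (definition \ref{def:4}), so the element cannot be chosen in one step; and the obstacle you correctly identify --- that a substitution $(b:=i)$ can turn a non-degenerate abstraction into a degenerate one --- is not resolved by ``a delicate case analysis together with the uniformity conditions,'' since the map $s$ involves no Kan filling operator and uniformity says nothing about it. The paper's fix is structural: use excluded middle to extract the \emph{least finite support} $\supp(x)$ and define $g_0(a,x)$ by recursion on $|\supp(x)|$, taking the open box to be a $\supp(x),a$-box whose face at $(a',i)$ is the recursively defined $g_0(a, x(a':=i))$ and whose $(a,0)$-face is $x(a:=0)$. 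Because the box carries a face for \emph{every} name in the support, the substitution action on $K^\uparrow \Delta$ collapses $g_0(a,x)(b:=i)$ to the face $g_0(a,x(b:=i))$, which was defined to be the degenerate value $x(b:=i) \in X$ whenever $a \# x(b:=i)$; this is exactly how the degeneracy-change problem is absorbed, and it is the content your proposal leaves unsupplied.

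Your first route has the same unresolved crux in a worse form: the diagonal $j$ defined by case split must be a morphism of $\sub$, and when a substitution makes $\langle a \rangle x$ degenerate you would need the chosen Kan filler of the resulting degenerate box to equal $h(x)$ on the nose, which the uniformity conditions do not give you (fillers of degenerate boxes need not be degenerate). So as written neither variant closes the proof; the missing ingredient is the support-recursive construction of the map into $P_Y X$.
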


\begin{proof}
  We write $r'$ for the reflexivity map $X \rightarrow [\names]_f X$.
  
  We will first define a map $g: [\names]_f X \rightarrow P_Y X$ such
  that the following diagram commutes.
  \begin{equation}
    \begin{gathered}
      \xymatrix{ X \ar@{=}[r] \ar[d] & X \ar[d] \\
        [\names]_f X \ar[r]^g \ar[d] & P_Y X \ar[d] \\
        X \times_Y X \ar@{=}[r] & X \times_Y X
      }
    \end{gathered}
  \end{equation}

  Define
  \begin{equation}
    \label{eq:10}
    X_0 := \{ (x, a) \;|\; a \# f(x) \}
  \end{equation}
  We first define a morphism $g_0 : X_0 \rightarrow P^0_Y X$.
  Applying excluded middle, we have that every $x \in X$ has a least
  finite support, $\supp(x)$. We define $g_0(a, x)$ by induction on
  $|\supp(x)|$. If $a \# x$, then define $g_0(a, x) := x$. Otherwise,
  let $A := \supp(x)$, and note that $a \in A$. We define a $1$-open
  $A,a$-box $v$ as follows. Let $v(a, 0) := x (a := 0)$. For $(a',i)
  \in (A \setminus a) \times 2$, we may assume by induction that
  $g_0(\langle a \rangle x (a' := i))$ has already been defined. Let
  $v(a',i) := g(a, x (a' := i))$. We then define $g_0(a, x)$ to be $(v,
  (x (a := 0), x))$.

  We now define $g(\langle a \rangle x)$ to be $g_0(a, x) (a := 1)$.

  We also have a morphism $P_Y X \rightarrow [\names]_f X$ given by
  projection. This induces a morphism $K r \rightarrow K r'$, which
  combined with the result before and the coalgebra structure on $r$
  gives a commutative diagram.
  \begin{equation}
    \begin{gathered}
      \xymatrix{ X \ar@{=}[r] \ar[d]_{r'} & X \ar@{=}[r] \ar[d]_r & X
        \ar[d]_{\lambda_r}
        \ar@{=}[r] & X \ar[d]_{\lambda_{r'}}
        \\
        [\names]_f X \ar[r]^g \ar[d] & P_Y X \ar[d] \ar[r]^c & K r
        \ar[d] \ar[r] & K r' \ar[d] \\
        X \times_Y X \ar@{=}[r] & X \times_Y X \ar@{=}[r] & X \times_Y
        X \ar@{=}[r] & X \times_Y X }
    \end{gathered}
  \end{equation}
  However, this easily gives us a coalgebra structure on $r'$.
\end{proof}

\section{Acknowledgements}
I am grateful for discussions with Nicola Gambino, Simon Huber and
Christian Sattler, which were very helpful with this work.

\bibliographystyle{abbrv}
\bibliography{mybib}{}

\end{document}